\documentclass[11pt]{amsart}
\usepackage{amsthm} 
\usepackage{fancybox}
\usepackage{graphicx}
\usepackage{ascmac}
\usepackage{tikz-qtree}
\usepackage{forest}
\usepackage{mathrsfs}
\usepackage{amscd,verbatim}
\usepackage{floatflt}
\usepackage{amssymb}
\usepackage{tikz-cd}
\usepackage[all]{xy}
\usepackage[colorlinks,linkcolor=blue,citecolor=blue,urlcolor=red]{hyperref}
\usepackage[normalem]{ulem}
\usepackage{geometry}

\usepackage{mathtools}
\newcommand{\perfdg}{\operatorname{perf}_{\operatorname{dg}}}

\renewcommand{\AA}{\mathbb{A}}

\newcommand{\Q}{\mathbb{Q}}
\newcommand{\Z}{\mathbb{Z}}

\newcommand{\sT}{\mathcal{T}}
\newcommand{\sU}{\mathcal{U}}

\newcommand{\sM}{\mathcal{M}}

\newcommand{\bC}{\mathbb{C}}

\newcommand{\HH}{\operatorname{HH}}

\newcommand{\HP}{\operatorname{HP}}

\newcommand{\sX}{\mathcal{X}}

\newcommand{\DM}{\operatorname{\mathbf{DM}}}

\newcommand{\DA}{\operatorname{\mathbf{DA}}}
\newcommand{\SH}{\operatorname{\mathbf{SH}}}
\newcommand{\SHinf}{\operatorname{\mathcal{SH}}}
\newcommand{\Dg}{\operatorname{\mathcal{DG}}}
\newcommand{\SHnc}{\operatorname{\mathbf{SH}}^{\operatorname{nc}}}
\newcommand{\SHncinf}{\operatorname{\mathcal{SH}}^{\operatorname{nc}}}

\newcommand{\dR}{\operatorname{dR}}

\newcommand{\LKl}{{L_{K(1,l)}}}
\newcommand{\Ktop}{K^{\operatorname{top}}}

\newcommand{\perf}{\operatorname{perf}}

\newcommand{\Hom}{\operatorname{Hom}}
\newcommand{\Fun}{\operatorname{Fun}}
\newcommand{\Sp}{\operatorname{\textbf{Sp}}}
\newcommand{\MapncSpQ}{\operatorname{Map}_{\SHncinf_{\Q}}^{\Sp}}
\newcommand{\MapncSp}{\operatorname{Map}_{\SHncinf}^{\Sp}}

\newcommand{\Spec}{\operatorname{Spec}}

\newcommand{\Sm}{\operatorname{\mathbf{Sm}}}

\newcommand{\eff}{{\operatorname{eff}}}

\newcommand{\Nis}{{\operatorname{Nis}}}
\newcommand{\et}{{\operatorname{\acute{e}t}}}

\newcommand{\ch}{{\operatorname{ch}}}

\renewcommand{\lim}{\operatornamewithlimits{\varprojlim}}
\newcommand{\colim}{\operatornamewithlimits{\varinjlim}}

\newcommand{\ccolim}{\operatorname{colim}}
\newcommand{\llim}{\operatorname{lim}}
\newcommand{\ol}{\overline}

\renewcommand{\epsilon}{\varepsilon}

\newcounter{spec}
{\end{list}}%

\newtheorem{lemma}{Lemma}[section]

\newtheorem{thm}[lemma]{Theorem}

\newtheorem{prop}[lemma]{Proposition}
\newtheorem{proposition}[lemma]{Proposition}
\newtheorem{cor}[lemma]{Corollary}

\newtheorem{defn}[lemma]{Definition}

\numberwithin{equation}{section}

\title{A Criterion for Phantomness of dg-categories}
\author{Keiho Matsumoto}

\begin{document}

\begin{abstract}
We study the question of whether the vanishing of additive invariants characterizes phantomness for smooth proper dg categories admitting geometric realizations. More precisely, let $X$ be a smooth proper variety over a field $k$, and let $\sT\subset \perfdg(X)$ be a $k$-linear admissible full dg subcategory. We construct a non-compact motive $\sM(\sT)\in \DM(k,\Q)$ and show that its $l$-adic realization recovers the $K(1,l)$-local algebraic $K$-theory of $\sT$. Analogous statements are obtained for Betti and de Rham realizations, which recover topological $K$-theory and periodic cyclic homology, respectively.

As a consequence, assuming that the Chow motive of $X$ is Kimura-finite, we prove a criterion for phantomness: the vanishing of $L_{K(1,l)}K(\sT_{\overline{k}})_\Q$, of Hochschild homology in characteristic zero, or of rational topological $K$-theory over $\mathbb{C}$ implies that the rational noncommutative motive of $\sT$ vanishes. In this way, our results provide a partial answer to a question raised by Sosna. We also establish a deformation-invariance result for phantomness in smooth proper families.
\end{abstract}

\maketitle


\section{Introduction}\label{sec:intro}
The notion of a \textit{phantom category}, studied in noncommutative algebraic geometry, has become increasingly important due to its connections with the minimal model program and mirror symmetry (see \cite{CKP13}). Throughout, let $k$ be a field. A $k$-linear dg category $\sT$ is said to \textit{admit a geometric realization} if it is a $k$-linear admissible full dg subcategory of $\perfdg(X)$ for some smooth proper $k$-variety $X$.

For a nonzero smooth proper dg category $\sT$ that admits a geometric realization, Gorchinskiy and Orlov \cite{GO13} define $\sT$ to be a \textit{quasi-phantom} if $\HH_*(\sT/k)=0$ and $K_0(\sT)$ is finite. They also define $\sT$ to be a \textit{universal phantom} if
\[
K_0(\sT\otimes Y)=0
\]
for every smooth proper $k$-variety $Y$. In this paper we introduce the following closely related notion.

\begin{defn}
We call $\sT$ a \textit{motivic quasi-phantom} if
\[
K_0(\sT\otimes Y)_\Q=0
\]
for every smooth proper $k$-variety $Y$.
\end{defn}

If $\sT$ admits a geometric realization, then $\sT$ is a motivic quasi-phantom if and only if the rational noncommutative motive of $\sT$ vanishes. When the base field $k$ is $\bC$ (or, more generally, an algebraically closed field of infinite transcendence degree over its prime subfield), one has
\[
\text{quasi-phantom}\ \Longrightarrow\ \text{motivic quasi-phantom}
\]
(see \cite[Lemma 5.3]{GO13}). Conversely, if $\sT$ is an admissible full dg subcategory of $\perfdg(X)$ and $K_0(X)_{\mathrm{tor}}$ is finitely generated (for instance, this holds under the Bass conjecture when $k$ is a number field or a finite field), then
\[
\text{motivic quasi-phantom}\ \Longrightarrow\ \text{quasi-phantom}.
\]
These implications can be summarized as follows:
\[
\resizebox{\textwidth}{!}{$
\xymatrix{
&& \text{quasi-phantom}\ar@{=>}[d] \\
\text{universal phantom} \ar@{=>}[rru]\ar@{=>}@/_18pt/[rrdd] & &K_0(\sT)\text{ is finite} \ar@{=>}[dd]_(.3){k=\ol{k} \text{ and } \mathrm{tr.deg}(k)=\infty} \\
&& \\
&&\text{motivic quasi-phantom}\ar@{=>}@/_60pt/[uuu]_-{\ch(k)=0 \text{ and }K_0(X)_{\mathrm{tor}} \text{ is f.g.\ as a }\Z\text{-module}} 
}
$}
\]

If $\sT$ is a motivic quasi-phantom, then every additive invariant with rational coefficients vanishes. In particular, for any prime $l$ invertible in $k$, one has $\LKl K(\sT_{\ol{k}})_\Q=0$. Furthermore, if $\ch(k)=0$, then $\HH_*(\sT/k)=0$. The main result of this paper is that this converse implication holds. It may also be viewed as providing a partial answer to a question raised by Sosna \cite[Question~1]{S20}.
\begin{thm}\label{criterionphantom}
Let $k$ be a field, and let $\sT$ be a non-zero $k$-linear smooth proper dg category admitting a geometric realization $\sT \hookrightarrow \perfdg(X)$. Assume that $X$ has Kimura-finite Chow motive. Then:
\begin{itemize}
\item[(1)] Let $l$ be a prime such that $l\in k^*$. If $\LKl K(\sT_{\ol{k}})_\Q=0$, then $\sT$ is a motivic quasi-phantom.
\item[(2)] Assume $\ch(k)=0$. If $\HH_*(\sT/k)=0$, then $\sT$ is a motivic quasi-phantom.
\item[(3)] Assume $k=\bC$. If $\Ktop(\sT)_{\Q}=0$, then $\sT$ is a motivic quasi-phantom.
\end{itemize}
\end{thm}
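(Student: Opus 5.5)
The idea is to realize $\sT$ as a direct summand of the Chow motive of $X$ and then use Kimura-finiteness to promote ``vanishing realization'' to ``vanishing motive''.

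First, since $\sT\hookrightarrow \perfdg(X)$ is admissible, we have a semi-orthogonal decomposition $\perfdg(X)=\langle \sT,\sT^\perp\rangle$. This makes the rational noncommutative motive $U(\sT)_\Q$ a direct summand of $U(\perfdg(X))_\Q$. The summand is cut out by an idempotent correspondence $e$, namely the kernel of the projection functor. This kernel is an object of $\perf(X\times X)$, and its class in $K_0(X\times X)_\Q$ is idempotent.

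Second, I would apply the Chern character (Mukai vector) to $e$. Via Tabuada's comparison between noncommutative Chow motives and Chow motives modulo $-\otimes\Q(1)$, this produces an idempotent endomorphism $p$ of $\bigoplus_i h(X)(i)$ in the orbit category. Its summand corresponds to the motive $\sM(\sT)$ constructed in the paper. By the main comparison results stated earlier, the $l$-adic, de Rham and Betti realizations of $\sM(\sT)$ recover the following, up to $2$-periodic regrading:
\begin{itemize}
\item $\LKl K(\sT_{\ol k})_\Q$;
\item $\HP_*(\sT)$, which in characteristic $0$ is determined by $\HH_*(\sT/k)$ through the degeneration of Hochschild-to-cyclic for smooth proper dg categories (Kaledin), so $\HH_*=0$ forces $\HP_*=0$;
\item $\Ktop(\sT)_\Q$.
\end{itemize}
So in each of the cases (1)–(3), the relevant Weil cohomology realization $H^*$ of $\sM(\sT)$ vanishes.

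Third, I would use Kimura-finiteness. Since $h(X)$ is Kimura-finite, so is every direct summand of $\bigoplus_{i\in\Z/2\text{-orbit}} h(X)(i)$, and hence $\sM(\sT)$, or rather a Chow-motivic lift of it, is Kimura-finite. For a Kimura-finite Chow motive $M$, the ideal of numerically trivial endomorphisms is nilpotent (Kimura). If $H^*(M)=0$, then $\mathrm{id}_M$ induces the zero map in cohomology, so its trace and all traces of its powers vanish. Hence $\mathrm{id}_M$ is homologically, and thus numerically, trivial, so it is nilpotent and $M=0$.

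The main obstacle is that the summand lives in the orbit category rather than in $\mathrm{Chow}(k)_\Q$. To handle this, I would lift $p$ to an idempotent on a finite sum $\bigoplus_{i=-d}^{d} h(X)(i)$ in $\mathrm{Chow}(k)_\Q$. The homomorphism to the orbit category has nilpotent kernel on Kimura-finite objects, so idempotents lift. Running the nilpotence argument above then kills the lift, and therefore kills $U(\sT)_\Q$. Vanishing of $U(\sT)_\Q$ gives $K_0(\sT\otimes Y)_\Q=\Hom(U(Y)_\Q,U(\sT)_\Q)=0$ for all smooth proper $Y$, i.e.\ $\sT$ is a motivic quasi-phantom.

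For (1), I would first base change to $\ol k$: vanishing of $U(\sT_{\ol k})_\Q$ implies vanishing of $U(\sT)_\Q$ by the standard transfer argument for finite extensions together with a colimit argument. For (3), I would use Blanc's comparison of the Betti realization with $\Ktop$.
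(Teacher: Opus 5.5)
Your skeleton matches the paper's. $\sM(\sT)$ is the image of an idempotent $f$ (your $p$) on $\bigoplus_i M(X)_\Q(i)[2i]$, i.e.\ on $h(X)$ in the orbit category. The realization theorem forces $R_l(f)=0$ (resp.\ the de Rham and Betti analogues; $\HH_*=0$ does force $\HP_*=0$), and Kimura nilpotence is meant to finish. Your reduction to $\ol k$ via transfers and filtered colimits is also fine; the paper instead uses conservativity of base change on $\DM(-,\Q)$.

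The gap is exactly the step you flag as the main obstacle. The functor $\pi:\mathrm{Chow}(k)_\Q\to\mathrm{Chow}(k)_\Q/_{-\otimes\Q(1)}$ is faithful but not full. For $N=\bigoplus_{i=-d}^{d}h(X)(i)$, the induced ring map $\operatorname{End}(N)\to\operatorname{End}_{\mathrm{orb}}(\pi N)$ is injective, so its kernel is zero and Kimura-finiteness plays no role there. It is also far from surjective. Lifting idempotents modulo a nil ideal requires the idempotent to lie in the image. Your $p\in\operatorname{End}_{\mathrm{orb}}(\pi h(X))=\bigoplus_i\Hom(h(X),h(X)(i))$ has no idempotent preimage. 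Its natural candidate, the truncation of $p$ to a finite window (the paper's $\tilde f=s_F\circ f\circ i_F$), is not idempotent, because truncation is not multiplicative. So you never produce a Chow motive $M'$ with $\pi(M')\cong\mathrm{Im}(p)$, and the nilpotence argument of your third step has nothing to act on.

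The missing observation is that no idempotent is needed. The paper takes an arbitrary $r:M(X)_\Q(m)[2m]\to\sM(\sT)$ and uses compactness to factor $j\circ r$ through a finite window $F$. It then notes that the non-idempotent truncation still satisfies $\tilde f\circ r_F=r_F$. Since $R_l(\tilde f)=0$ and $\bigoplus_{i\in F}M(X)_\Q(i)[2i]$ is a Kimura-finite Chow motive, $\tilde f$ is nilpotent. Hence $r_F=0$, so $r=0$, and $\sM(\sT)=0$ by Lemma~\ref{lem3.1}.

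In your orbit-category language you can argue just as directly. Since $\Hom(h(X),h(X)(i))=0$ for $|i|>d$, every nonzero product of homogeneous components of $p$ has all partial degrees in $[-d,d]$. So the components of $p^n$ are blocks of $\tilde p^{\,n}$ on $N$. Each component of $p$ is homologically trivial, so $\tilde p$ is, and Kimura gives $\tilde p^{\,n}=0$ for some $n$; hence $p=p^n=0$. You could also try to repair the lift itself, for instance via numerical motives. But that requires exactly this nilpotence in the orbit category, after which the lift is superfluous.
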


Here the notion of \textit{Kimura finiteness} for Chow motives plays a central role. Kimura finiteness is a property of Chow motives introduced independently by Kimura \cite{K05} and O'Sullivan. They conjectured that every Chow motive is Kimura-finite. 
For smooth projective surfaces with vanishing geometric genus, the Kimura-finiteness of the Chow motive is equivalent to Bloch's conjecture.

As a corollary of this theorem, we can establish the deformation invariance of the property of admitting a phantom category, as follows. 

\begin{cor} Let $S$ be a Noetherian connected scheme, and let $f:\sX \to S$ be a smooth proper morphism. Assume that the Chow motive of each fiber is Kimura-finite. Suppose that for some point $0\in S$, the fiber $\sX_0$ admits a motivic quasi-phantom category $\sT_0$, and that there exists a prime number $l\in \kappa(0)^*$ such that $L_{K(1,l)}K(\sT_0)\neq 0$. Then there exists an \'etale morphism $\sU \to S$ whose image contains $0$ and such that every fiber of $\sX_{\sU} \to \sU$ admits a motivic quasi-phantom category. Moreover, under the Belmans--Okawa--Ricolfi conjecture \cite[Conjecture 9.9]{BOR20}, the nonvanishing condition $L_{K(1,l)}K(\sT_0)\neq 0$ is unnecessary.
\end{cor}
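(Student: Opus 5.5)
We deduce the corollary from Theorem~\ref{criterionphantom}(1), combined with two deformation-theoretic inputs: admissible subcategories spread out to an étale neighbourhood, and $K(1,l)$-local $K$-theory is locally constant in smooth proper families.

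\textbf{Step 1: spreading out.} The category $\sT_0\subset\perfdg(\sX_0)$ is admissible, so it is the kernel of a semiorthogonal decomposition. Equivalently, it is cut out by an idempotent Fourier--Mukai kernel, or it is generated by finitely many objects forming part of a semiorthogonal decomposition. We will use the known deformation theory of semiorthogonal decompositions in smooth proper families (Belmans--Okawa--Ricolfi \cite{BOR20}): the moduli of semiorthogonal decompositions is étale over $S$. This gives an étale morphism $\sU\to S$ with $0$ in its image, a point $u_0\in\sU$ over $0$, and an $\sU$-linear admissible subcategory $\sT_{\sU}\subset\perfdg(\sX_{\sU})$. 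Its base change to $u_0$ is $\sT_0\otimes_{\kappa(0)}\kappa(u_0)$. Shrinking $\sU$, we may take it connected with $l$ invertible on $\sU$.

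\textbf{Step 2: local constancy of the invariant.} For each $u\in\sU$ we must show $\LKl K(\sT_{\sU,\ol{u}})_\Q=0$. Over $u_0$ this holds because $\sT_0$ is a motivic quasi-phantom, so all rational additive invariants vanish, including after passing to $\ol{\kappa(0)}$. To transport this to other points, we use the paper's motive $\sM(\sT)$, whose $l$-adic realization recovers $\LKl K(\sT)$. We expect the construction to work in families. The $K(1,l)$-local $K$-theory of $\sT_{\sU}$ should then be computed by a lisse $l$-adic sheaf on $\sU$: it is a direct summand, via the idempotent kernel, of the étale $K$-theory of $\sX_{\sU}$, that is, of the $2$-periodized $R f_*\Q_l$, which is lisse by smooth and proper base change. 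Since $\sU$ is connected, the rank of a lisse sheaf is constant, so vanishing at $\ol{u_0}$ gives vanishing at every geometric point. This step is the main obstacle, because the idempotent decomposition must be made compatible with specialization; that compatibility is exactly where the étale-sheaf realization of $\sM$ is needed.

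\textbf{Step 3: conclusion.} Take a fiber $\sX_u$. Its Chow motive is Kimura-finite by hypothesis, and $\sT_u$ is admissible in it with $\LKl K(\sT_{u,\ol{u}})_\Q=0$. Theorem~\ref{criterionphantom}(1) then shows that $\sT_u$ is a motivic quasi-phantom. We also need $\sT_u\neq 0$, which the definition of a phantom requires. This is where the hypothesis $L_{K(1,l)}K(\sT_0)\neq0$ enters. The class of the idempotent, or a Hochschild or integral invariant, detects nonvanishing. Nonvanishing of the integral $K(1,l)$-local $K$-theory, presumably a torsion class, propagates along the lisse integral sheaf in the same way as in Step~2. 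Hence $\sT_u\neq0$ for all $u$.

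Under the Belmans--Okawa--Ricolfi conjecture, the locus where the deformed component is nonzero is open and closed, so nonvanishing propagates from $u_0$ automatically. The extra hypothesis can then be dropped.
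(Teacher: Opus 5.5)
Your outline has the same structure as the paper's proof. The paper deduces the corollary from \cite[Theorem~A]{BOR20} (your Step~1), Proposition~\ref{deformK1localK} (your Step~2) and Theorem~\ref{criterionphantominsection3}(1) (your Step~3). As in your proposal, the hypothesis $\LKl K(\sT_0)\neq 0$ is used only to keep the deformed categories nonzero.

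\textbf{The gap is Step~2.} You flag it as the main obstacle but do not prove it. It rests on three things you have not supplied. First, a relative version of $\sM$ and an \'etale-sheaf realization of it, neither of which is constructed. Second, an idempotent endomorphism of the lisse sheaf $\bigoplus_i Rf_*\Q_l(i)[2i]$, which is never defined. The projection kernel on $\sX_{\sU}\times_{\sU}\sX_{\sU}$ does not act on $Rf_*\Q_l$ until you build a relative cohomological correspondence from it, e.g.\ via a Chern character or Mukai vector. Third, even then you would need a Grothendieck--Riemann--Roch-type compatibility identifying that action, fibre by fibre, with the action of $e$ on $\LKl K(\sX_{\bar u})_{\Q}$ under Thomason's isomorphism.

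The nonvanishing claim in Step~3 is weaker still. Torsion in $\LKl K(\sT_0)$ is invisible to $\sM$ and to any $\Q_l$-sheaf. Integrally, Thomason's spectral sequence need not degenerate, so there is no ready-made ``lisse integral sheaf'' whose stalks are these groups.

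\textbf{The missing idea.} No sheaf-level idempotent is needed: compare fibres through the $K$-theory of the total space.
\begin{itemize}
\item Reduce to $S=\Spec A$ with $A$ a strictly henselian DVR.
\item Thomason's descent spectral sequence, together with smooth and proper base change, shows that the restriction maps $K/l^v(\sX)[\beta^{-1}]\to K/l^v(\sX_{\bar s})[\beta^{-1}]$ and $K/l^v(\sX)[\beta^{-1}]\to K/l^v(\sX_{\bar t})[\beta^{-1}]$ are isomorphisms on homotopy groups.
\item These maps commute with $e=i\circ l$ and its base changes, simply because the projection onto an $S$-linear admissible subcategory commutes with pullback.
\item Hence they restrict to isomorphisms $\pi_jK/l^v(\sT_{\bar s})[\beta^{-1}]\simeq \pi_jK/l^v(\sT_{\bar t})[\beta^{-1}]$. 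This is Proposition~\ref{deformK1localK}.
\end{itemize}

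Because this is mod-$l^v$ information, it controls both halves of the argument at once.
\begin{itemize}
\item Nonvanishing spreads: $\LKl K(\sT_0)$ is $l$-complete, hence nonzero mod $l^v$, and Thomason identifies that reduction with $K/l^v[\beta^{-1}]$.
\item Rational vanishing spreads: the groups $\pi_j\LKl K(\sT_{\bar u})$ are finitely generated $\Z_l$-modules, being retracts of those of $\sX_{\bar u}$. Their ranks are determined by how the orders of the mod-$l^v$ groups grow with $v$. The paper leaves this last step implicit.
\end{itemize}
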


This corollary may be viewed as a generalization of the argument of B\"ohning--Bothmer--Katzarkov--Sosna in \cite[Section~10]{BBKS15}: for families of $\Z/5\Z$-torsion numerical Godeaux surfaces, they prove a corresponding deformation-invariance statement by combining Bloch's conjecture with the deformation invariance of the Grothendieck group. In our setting, the Kimura-finiteness assumption plays the role that allows one to extend this strategy to a broader class of families. 

\subsection{Notation}
For a commutative ring $R$, we use the following notation.

\begin{itemize} 
    \item[] $\Sm(R)$ denotes the category of smooth varieties over $R$. 
    \item[] $\Dg(R)$ denotes the $\infty$-category of idempotent-complete dg categories of finite type over $R$, introduced by To\"en--Vaqui\'e.
    \item[] $\DM(R)$ denotes the triangulated category of mixed motives: for fields $R$, this is the category constructed by Voevodsky \cite{V00b}, while for commutative rings $R$, we use the construction of Cisinski--D\'eglise \cite{CD19}
    \item[] $\SHncinf(R)$ denotes the stable $\infty$-category of noncommutative motivic spectra over $R$ (see \cite{R12}).
    \item[] $\SHnc(R)$ denotes the homotopy category of $\SHncinf(R)$.
    \item[] For an algebra object $A$ in $\SHncinf(R)$, $\SHncinf(R)_A$ denotes the $\infty$-category of $A$-modules in $\SHncinf(R)$.
    \item[] $\SHnc(R)_A$ denotes the homotopy category of $\SHncinf(R)_A$.
\end{itemize}

\section*{Acknowledgments}
The author is grateful to Atsushi Takahashi and Shinnosuke Okawa for valuable discussions on noncommutative algebraic geometry and phantom phenomena. The author would also like to thank Takumi Otani for helpful comments on an earlier draft. This work was supported by JSPS KAKENHI Grant Numbers 22K13898 and 25KJ0210.

\section{Dg-categories and motives}\label{MotivesandNon-commutative motives}
In this section, we fix a field $k$ and fix a prime number $l$ with $l\in k^*$. We begin by recalling some basic facts from motivic homotopy theory that will be used later. We write $H\Q$ for the motivic rational Eilenberg--MacLane spectrum in $\SH(k)$. There is an equivalence
\[
\DM(k,\Q) \simeq \SH(k)_{H\Q}.
\]
For fields of characteristic $0$, this is proved in \cite{RO08}; for arbitrary fields, see \cite[Theorem 5.8]{HSO17}. Denote by $KGL_\Q$ the rational algebraic K-theory spectrum in the motivic stable homotopy category $\SH(k)$. By \cite[Corollary 14.2.17]{CD19}, one has an isomorphism of $H\Q$-algebra
\begin{equation}\label{KGLQHQ}
KGL_\Q \simeq \bigoplus_{m \in \Z} H\Q(m)[2m]
\end{equation}
in $\SH(k)$. 

In \cite[Corollary~1.16]{R13}, Robalo proves that there is a fully faithful embedding $L_{KGL_\Q}:\SHinf(k)_{KGL_\Q} \hookrightarrow \SHncinf(k)_\Q$. Although the argument in \cite{R13} is given under the assumption of resolution of singularities, the same strategy applies without this assumption over $\Q$-coefficients, because $\SH(k)_{H\Q}$ is compactly generated by objects of the form $X_+(q)$, where $X$ is smooth and proper and $q\in\Z$ (see \cite{R05}). The embedding $L_{KGL_\Q}:\SHinf(k)_{KGL_\Q} \hookrightarrow \SHncinf(k)_\Q$ admits the right adjoint $\sM_{KGL_\Q}:\SHncinf(k)_\Q \to \SHinf(k)_{KGL_\Q}$ (see \cite[Corollary 1.13]{R13}).

\subsection{commutative motive of a dg-category} Now consider the following diagram of triangulated categories:
\[
\xymatrix{
\DM_\Q\ar[r]^-{\simeq}_-{H} &\SH_{H\Q} \ar@/^13pt/[r]^{-\otimes_{H\Q}KGL_{\Q}} \ar@{}[r]|{\perp} & \SH_{KGL_\Q} \ar@/^13pt/[l]^{I} \ar@/^13pt/[r]^{L_{KGL_{\Q}}} \ar@{}[r]|{\perp}  & \ar@/^13pt/[l]^{\sM_{KGL_{\Q}}} 
} \SHnc_\Q
\]
Here, $I$ is thje forgetful functor. We may construct a $\Q$-linear functor
\[
\sM=H^{-1}\circ I \circ \sM_{KGL_{\Q}}.
\]
For a smooth proper $k$-variety $X$ and a $k$-linear admissible full dg subcategory $\sT\subset \perfdg(X)$, this yields a motive
\[
\sM(\sT)=H^{-1}\circ I \circ \sM_{KGL_{\Q}} \bigl(\sU(\sT) \bigr).
\]
We study basic properties of this motive.

\subsection{Realization of $\sM(\sT)$}From now on, we assume that $k$ is algebraically closed. Now consider the $l$-adic realization 
\[
    R_l:\DM(k,\Q) \to D(\Q_l)^{op}.
\]
In this subsection we prove the following theorem.

\begin{thm}\label{ladicrealizationthm}
Let $\sT$ be a smooth proper dg category admitting a geometric realization. The $l$-adic \'{e}tale realization of $\sM(\sT)$ satisfies
\[
H^i \Bigl(R_l\bigl( \sM(\sT) \bigr) \Bigr) \simeq \pi_i\LKl K(\sT)
\]
\end{thm}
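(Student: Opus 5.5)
Reduce to the case $\sT=\perfdg(X)$, and then identify both sides through Thomason's comparison of $K(1,l)$-local $K$-theory with étale cohomology, made compatible with the adjunction defining $\sM$.

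\emph{Step 1 (the variety case).} For $\sT=\perfdg(X)$ with $X$ smooth proper, the noncommutative motive $\sU(\perfdg(X))$ lies in the image of $L_{KGL_\Q}$. It is $L_{KGL_\Q}(\Sigma^\infty X_+\otimes KGL_\Q)$, or its dual; I will fix the variance so that it matches the contravariant $R_l$. Since $L_{KGL_\Q}$ is fully faithful, the unit gives
\[
\sM_{KGL_\Q}\bigl(\sU(\perfdg(X))\bigr)\simeq KGL_\Q\otimes X.
\]
Applying $I$ and \eqref{KGLQHQ}, I get
\[
\sM(\perfdg(X))\simeq \bigoplus_{m\in\Z} M(X)(m)[2m]
\]
in $\DM(k,\Q)$, up to duality. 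Hence
\[
H^i\bigl(R_l\sM(\perfdg(X))\bigr)\simeq \prod_m H^{i+2m}_{\et}(X,\Q_l(m)),
\]
and only finitely many terms are nonzero. On the other side, Thomason's theorem (with Bott inversion), rationalized over $k=\ol{k}$, gives an Atiyah–Hirzebruch-type spectral sequence
\[
H^p_\et(X,\Z_l(q/2))\Rightarrow \pi_{*}\LKl K(X),
\]
which degenerates rationally by the Chern character / Adams operations. This yields
\[
\pi_i\LKl K(X)_\Q\simeq \prod_m H^{2m-i}_\et(X,\Q_l(m)).
\]
After reindexing by the $2$-periodic twist, the two sides agree. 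I should also check that $\LKl K(X)$ is already rational in the relevant sense, or interpret $\pi_i$ with $\Q$-coefficients as in Theorem \ref{criterionphantom}.

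\emph{Step 2 (naturality).} To pass to summands, I need the isomorphism of Step 1 to be natural for correspondences. Concretely, it must be functorial in morphisms $\sU(\perfdg(X))\to\sU(\perfdg(Y))$ in $\SHnc_\Q$, which are given by $K_0(X\times Y)_\Q$. The plan is to realize both sides as additive invariants of dg categories, namely
\[
\sT\mapsto H^*\bigl(R_l\sM(\sT)\bigr) \quad\text{and}\quad \sT\mapsto \pi_*\LKl K(\sT)_\Q .
\]
Both factor through $\SHnc_\Q$: the first by construction, the second since $\LKl K$ is a localizing invariant. I will construct a natural transformation between them, using a $K(1,l)$-local, étale-realized Chern character. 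One option is the étale realization of $KGL$, namely Thomason's map $KGL\to$ the $l$-adic étale Bott-periodic spectrum, composed with $L_{KGL_\Q}\dashv\sM_{KGL_\Q}$. The map is then an isomorphism on the generators $\sU(\perfdg(X))$ by Step 1.

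\emph{Step 3 (admissible subcategories).} If $\sT\subset\perfdg(X)$ is admissible, there is a semiorthogonal decomposition $\perfdg(X)=\langle\sT,\sT^\perp\rangle$. This makes $\sU(\sT)$ a direct summand of $\sU(\perfdg(X))$, cut out by an idempotent correspondence. Both functors are additive, so the natural isomorphism of Step 2 restricts to summands, which gives the claim for $\sT$.

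The main obstacle is Step 2: producing a genuinely natural comparison map, rather than just abstract isomorphisms for each $X$. The second functor is not a priori defined on $\DM$, so I would first try to show that the $l$-adic realization of $KGL_\Q$-modules is represented by $\LKl K$. This means identifying the composite of $R_l\circ H^{-1}\circ I$ with $\LKl K$ on $KGL_\Q$-modules via the motivic Thomason étale descent theorem (Levine / Cisinski–Déglise étale $K$-theory), and only then applying the adjunction.
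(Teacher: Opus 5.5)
Your plan is essentially the paper's. The identification you isolate at the end of Step~2 is exactly the paper's equivalence $F_1\simeq F_2$ between $R_l\circ H^{-1}\circ I$ and $\MapncSpQ\bigl(L_{KGL_\Q}(-),(\LKl K)_\Q\bigr)$ as functors $\SH_{KGL_\Q}\to D(\Q_l[\beta^{\pm}]\text{-mod})^{op}$. The paper evaluates this at $\sM_{KGL_\Q}(\sU(\sT))$, using that $\sU(\sT)$, a retract of $\sU(\perfdg(X))$, lies in the essential image of $L_{KGL_\Q}$; as you suspected, what this yields is $\pi_i(\LKl K(\sT))_\Q$.

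The naturality-for-correspondences obstacle you flag is not handled in the paper by an explicit \'etale Chern-character map. Instead, the comparison is checked only on $\Sm(k)$ via the functoriality of Thomason's spectral sequence. Ayoub's $\DA^{\et,\eff}(k,\Q)\simeq\DM^{\eff}(k,\Q)$ means no transfers need to be checked, and symmetric monoidality carries the comparison through Tate twists.
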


We prove this theorem by constructing two symmetric monoidal functors
\[
F_1,F_2:\SH_{KGL_\Q} \to D(\Q_l[\beta^{\pm}]\text{-}\mathrm{mod})^{op},
\]
and showing that they are equivalent.

\subsection{Construction of $F_1$}
Consider the composite functor
\begin{equation}\label{IHRl}
\SH_{KGL_{\Q}}\overset{I}{\to} \SH_{H\Q} \overset{\simeq}{ \underset{H}{\leftarrow}} \DM_\Q \overset{R_l}{\to} D(\Q_l)^{op}.
\end{equation}
The functor $R_l\circ H^{-1}$ sends $KGL_\Q \simeq \bigoplus_{m \in \Z} H\Q(m)[2m]$ to $\Q_l[\beta^\pm]$. Here $\beta$ is a degree-$2$ generator. Hence \eqref{IHRl} factors through a monoidal functor
\[
\xymatrix{
\SH_{KGL_\Q} \ar@{.>}[r]^-{F_1} \ar[d]_-{I}& D(\Q_l[\beta^\pm]\text{-mod})^{op} \ar[d]^-{\text{forgetful}} \\
\SH_{H\Q} \ar[r]_-{R_l\circ H^{-1}} & D(\Q_l)^{op}.
}
\]

\begin{lemma}
The functor $F_1$ admits a right adjoint.
\end{lemma}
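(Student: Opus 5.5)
Since $F_1$ lands in an opposite category, a right adjoint $G\colon D(\Q_l[\beta^\pm]\text{-mod})^{op}\to \SH_{KGL_\Q}$ of $F_1$ is the same as a functor for which the contravariant functor $F_1^{op}\colon \SH_{KGL_\Q}^{op}\to D(\Q_l[\beta^\pm]\text{-mod})$ sends colimits in $\SH_{KGL_\Q}$ to limits. So the plan is to work at the level of stable presentable $\infty$-categories and apply the adjoint functor theorem, after first lifting $F_1$ to an $\infty$-functor $\SHinf(k)_{KGL_\Q}\to \mathcal{D}(\Q_l[\beta^\pm])^{op}$. The lift comes from the $\infty$-categorical $l$-adic realization $R_l$, which is lax monoidal and exact. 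Its target $\mathcal{D}(\Q_l)^{op}$ has all limits and colimits, and the source $\SHinf(k)_{KGL_\Q}$ is presentable.

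Concretely, $R_l$ on $\DM(k,\Q)$ is (up to the contravariance) the functor $M\mapsto \mathrm{Map}(M,\mathcal{H}_l)$ for an \'etale cohomology spectrum $\mathcal{H}_l$ representing rational $l$-adic cohomology in $\DM(k,\Q)\simeq \SH(k)_{H\Q}$. This identification is the key input, and it is where I expect the main obstacle to lie. Because $k$ is algebraically closed, $\mathcal{H}_l$ is an $H\Q$-algebra; for the $\Q_l$-linear structure one may take it as $\lim_n$ of the $\Z/l^n$ \'etale spectra, rationalized. Once $R_l$ is of this form, it converts arbitrary direct sums into products, and being exact it converts all colimits into limits.

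The functor $F_1$ is then the composite
\[
\SHinf(k)_{KGL_\Q}\xrightarrow{I}\SHinf(k)_{H\Q}\xrightarrow{R_l\circ H^{-1}}\mathcal{D}(\Q_l)^{op},
\]
upgraded to $\Q_l[\beta^\pm]$-modules through the action of $R_l(KGL_\Q)=\Q_l[\beta^\pm]$. The forgetful functor $I$ preserves all colimits, since it is right adjoint to $-\otimes_{H\Q}KGL_\Q$ and also commutes with colimits because $KGL_\Q$-modules are computed underlying. Moreover, the forgetful functor $D(\Q_l[\beta^\pm]\text{-mod})\to D(\Q_l)$ is conservative and creates limits. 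Hence $F_1$, viewed as a functor into $\mathcal{D}(\Q_l[\beta^\pm])^{op}$, preserves all small colimits. Explicitly, $F_1(E)=\mathrm{Map}_{KGL_\Q}(E,\,\mathcal{H}_l\otimes_{H\Q}KGL_\Q)$.

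To finish, apply Lurie's adjoint functor theorem, HTT~5.5.2.9. The source is presentable, and the target $\mathcal{D}(\Q_l[\beta^\pm])^{op}$ is not presentable, but the theorem only requires the target to be locally small. Alternatively, one can write the adjoint down directly: setting $G(N)=\underline{\mathrm{Hom}}_{\Q_l[\beta^\pm]}(N,\,\mathcal{H}_l\otimes_{H\Q}KGL_\Q)$, an internal-hom construction whose existence is guaranteed because the representability argument goes through, the functor $G$ is right adjoint to $F_1$. Passing to homotopy categories then gives the right adjoint of $F_1$ on triangulated categories.
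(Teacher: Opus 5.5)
Your proposal is correct and follows essentially the same route as the paper: presentability of $\SH_{KGL_\Q}$, the adjoint functor theorem, and colimit preservation of $F_1$. Like the paper, you deduce the latter from colimit preservation by $I$ and by $R_l\circ H^{-1}$, together with conservativity of the forgetful functor from $\Q_l[\beta^{\pm}]$-modules. You are more careful than the paper about the opposite category: only local smallness of the target is needed, the forgetful functor must create limits rather than colimits, and the fact that $R_l$ turns colimits into limits rests on its representability by an $l$-adic cohomology spectrum, which the paper simply asserts.
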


\begin{proof}
Since $\SH_{H\Q}$ is compactly generated, it follows from \cite[Proposition 3.28]{R12} that $\SH_{KHL_\Q}$ is also compactly generated. By the adjoint functor theorem \cite[Theorem 1.4.4.1]{L18}, it suffices to show that $F_1$ preserves colimits. By \cite[Corollary 3.4.4.6]{L18}, the functor $I$ preserves colimits, hence so does $R_l\circ H^{-1}\circ I$. Since the forgetful functor $D(\Q_l[\beta^{\pm}]\text{-mod}) \to D(\Q_l)$ is conservative and preserves colimits, it follows that $F_1$ preserves colimits.
\end{proof}

\subsection{Construction of $F_2$} Let $\Sp$ denote the $\infty$-category of spectra, and let $\widehat{\Sp}$ denote the big $\infty$-category of spectra. Let
\[
K^S:\Dg(k)\longrightarrow \Sp
\]
denote the nonconnective $K$-theory of dg categories constructed in \cite{CT11}.
Let $1_{nc}$ be the unit object of $\SHnc$. Consider the functor
\[
(\LKl\, l_{\AA^1}^{nc}K^S(-))_\Q:\Dg(k)\longrightarrow \Sp_\Q.
\]
By definition, this functor sends Nisnevich squares of dg categories to pullback--pushout squares in $\widehat{\Sp_\Q}$ and sends the zero dg category to the zero object. It is also $\AA^1$-invariant. Hence we may regard
\[
(\LKl\, l_{\AA^1}^{nc}K^S(-))_\Q\in \SHncinf_\Q.
\]
We then consider the associated representing functor
\[
\MapncSpQ\!\bigl(-,(\LKl\, l_{\AA^1}^{nc}K^S)_\Q\bigr):\SHncinf_\Q\longrightarrow \Sp_\Q^{op}.
\]
Composing with the enriched Yoneda embedding, the composite
\[
\Dg(k)^{op}\xrightarrow{\ \sU\ }\SHncinf_\Q\longrightarrow \Sp_\Q^{op}
\]
sends a dg category $\sT$ to $(\LKl\, l_{\AA^1}^{nc}K^S(\sT))_\Q$.

Now let $\sT$ admit a geometric realization $i_{\sT}:\sT\hookrightarrow \perfdg(X)$.
Let $l_{\sT}:\perfdg(X)\to \sT$ denote the left adjoint of the admissible fully faithful functor $i_{\sT}$, and consider the composite
\[
e_\sT:\perfdg(X)\xrightarrow{\,l_\sT\,}\sT\xrightarrow{\,i_\sT\,}\perfdg(X).
\]
Since $e_{\sT}$ is idempotent, in $\SHncinf$ one has an identification
\[
\sU(\sT)\simeq \ccolim\bigl(\sU(\perfdg(X)),\,\sU(e_{\sT}^{op})\bigr).
\]

\begin{lemma}
In $\Sp$ there is an equivalence
\[
K(\sT)\simeq l_{\AA^1}^{nc}K^S(\sT).
\]
In particular, in $\Sp_\Q$ one has
\[
(\LKl\, l_{\AA^1}^{nc}K^S(\sT))_\Q \simeq \bigl(\LKl\, K(\sT)\bigr)_\Q.
\]
\end{lemma}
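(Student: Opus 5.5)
The plan is to show that nonconnective $K$-theory of $\sT$ is already $\AA^1$-invariant in the noncommutative sense. By definition, $l_{\AA^1}^{nc}K^S$ is the $\AA^1$-localization of $K^S$. It can be computed by the geometric realization
\[
l_{\AA^1}^{nc}K^S(\sT)\simeq \ccolim_{[n]\in\Delta^{op}} K^S\bigl(\sT\otimes \perfdg(\Delta^n_k)\bigr),
\]
where $\Delta^n_k=\Spec k[t_0,\dots,t_n]/(\sum t_i-1)$. A Nisnevich-excisive functor needs no further sheafification, since $K^S$ already satisfies Nisnevich descent and the singular construction preserves it. Given this formula, it suffices to prove that the natural maps $K^S(\sT)\to K^S(\sT\otimes\perfdg(\AA^n_k))$ are equivalences for all $n$. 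The simplicial object is then constant, and its realization is $K^S(\sT)=K(\sT)$.

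To get $\AA^1$-invariance, I would use the geometric realization. We have $\perfdg(X)\otimes\perfdg(\AA^n_k)\simeq\perfdg(X\times\AA^n)$, and the admissible embedding $i_\sT$ with idempotent $e_\sT$ induces a retract diagram. Precisely, $\sT\otimes\perfdg(\AA^n)$ is the image of the idempotent $e_\sT\otimes\mathrm{id}$ on $\perfdg(X\times\AA^n)$, compatibly with pullback along $X\times\AA^n\to X$. Hence $K^S(\sT)\to K^S(\sT\otimes\perfdg(\AA^n))$ is a retract of $K^S(X)\to K^S(X\times\AA^n)$ in the arrow category. The latter map is an equivalence by homotopy invariance of (nonconnective) $K$-theory for regular schemes: $X$ is smooth over a field, hence regular, so $K^S(X)=K^B(X)=G(X)$, and homotopy invariance of $G$-theory applies. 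A retract of an equivalence is an equivalence, which gives the first claim.

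The second claim follows formally. Apply the exact, colimit-preserving functors $\LKl$ and then rationalization $(-)_\Q$ to the equivalence $K(\sT)\simeq l_{\AA^1}^{nc}K^S(\sT)$.

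I expect the main obstacle to be the first step: justifying that $l_{\AA^1}^{nc}$, as defined in Robalo's framework, is computed by the simplicial formula above, or at least that it is the identity on functors that are already Nisnevich-local and $\AA^1$-invariant. If that step is awkward, I would argue differently. The localization $l_{\AA^1}^{nc}$ is a left adjoint onto the full subcategory of $\AA^1$-invariant Nisnevich-local presheaves, and $K^S$ restricted to the relevant objects already lies in this subcategory. The remaining point is then to show that $\AA^1$-invariance of $K^S$ holds on all of $\Dg(k)$, which is where the geometric realization is needed, or that evaluating at $\sT$ only involves dg categories of the form $\sT\otimes\perfdg(Y)$, to which the retract argument applies.
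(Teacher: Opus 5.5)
Your argument is correct, but it applies the retract argument at a different point than the paper does. The paper never computes $l_{\AA^1}^{nc}$. It takes the unit map $K^S\to l_{\AA^1}^{nc}K^S$ and observes, by naturality, that its component at $\sT$ is a retract of its component at $\perfdg(X)$, by splitting the idempotent $e_\sT=i_\sT\circ l_\sT$. (The paper phrases the target side through $\sU(\sT)\simeq\ccolim(\sU(\perfdg(X)),\sU(e_\sT^{op}))$ and corepresentability in $\SHncinf$, although any functor preserves retracts.) It then quotes Robalo \cite[Corollary~1.14]{R13} for the fact that this unit is an equivalence on smooth $X$.

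You instead identify $l_{\AA^1}^{nc}K^S$ with Weibel's homotopy $K$-theory $KH$ via the singular construction. You then apply the retract to the maps $K^S(\sT)\to K^S(\sT\otimes\perfdg(\AA^n))$, which reduces everything to classical homotopy invariance of $K$-theory of the regular scheme $X$.

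The paper's route is purely formal once the comparison on smooth schemes is taken as a black box. Yours opens that black box: it is essentially the statement $KH=K$ on $K$-regular objects. It also gives the slightly stronger conclusion that $\sT$ itself is $K$-regular. The price is that you need $l_{\AA^1}^{nc}K^S\simeq KH$. That identification is available in the literature (Robalo, Tabuada), and your sketch of it is sound. In a stable setting, geometric realization commutes with the finite limits expressing Nisnevich excision, and $-\otimes\perfdg(\Delta^n)$ preserves Nisnevich squares. Strict $\AA^1$-homotopies become simplicial homotopies.

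Your fallback in the last paragraph, phrased only via $l_{\AA^1}^{nc}$ being a left adjoint, would not by itself determine the value at $\sT$. You do not need it, though.

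Finally, if $K$ is read as connective $K$-theory, add one step: $K_{<0}(\sT)=0$, since $K^S(\sT)$ is a retract of $K^S(X)$ with $X$ regular. This justifies your identification $K^S(\sT)=K(\sT)$.
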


\begin{proof}
From the adjunction
\[
\xymatrix{
\Fun(\Dg,\Sp)\ar@/^13pt/[rr]^{l_{\AA^1}^{nc}} \ar@{}[rr]|{\perp} && \Fun_{\Nis,\AA^1}(\Dg,\Sp)\ar@/^13pt/[ll]
}
\]
and the induced unit map, we obtain a commutative diagram in $\Sp$:
\[
\xymatrix{
K(X) \ar[rr]^{K(e_\sT)} \ar[d] && K(X)  \ar[d] \\
l_{\AA^1}^{nc}K^S(X) \ar[rr]_{l_{\AA^1}^{nc}K^S(e_{\sT})}& &l_{\AA^1}^{nc}K^S(X).
}
\]
Since $X$ is smooth, both vertical maps are equivalences in $\Sp$ (see \cite[Corollary 1.14]{R13}). As $e_{\sT}$ is idempotent, we obtain an equivalence
\[
K(\sT)\simeq \llim\bigl(K(X),\,K(e_{\sT})\bigr)
\]
in $\Sp$, and hence
\[
K(\sT)\simeq \ccolim\bigl(K(X),\,K(e_{\sT}^{op})\bigr)
\]
in $\Sp^{op}$.
On the other hand, from $\sU(\sT)\simeq \ccolim\bigl(\sU(\perfdg(X)),\sU(e_{\sT}^{op})\bigr)$ we deduce an equivalence in $\Sp^{op}$
\begin{eqnarray*}
l_{\AA^1}^{nc}K^S(\sT)
&\simeq&
\MapncSp\!\bigl(\ccolim(\sU(\perf(X)),\sU(e_{\sT}^{op})),\,1_{nc}\bigr)\\
&\simeq&
\ccolim\bigl(l_{\AA^1}^{nc}K(X),\,l_{\AA^1}^{nc}K(e_{\sT}^{op})\bigr).
\end{eqnarray*}
This proves the claim.
\end{proof}

Now consider the composite functor
\begin{equation}\label{compositionfunctorF2}
\SHinf_{KGL_\Q}\xrightarrow{\,L_{KGL_\Q}\,}\SHncinf_\Q
\xrightarrow{\ \MapncSpQ\!\bigl(-,(\LKl\, l_{\AA^1}^{nc}K^S)_\Q\bigr)\ }
\Sp_\Q^{op}.
\end{equation}

\begin{lemma}
The homotopy category functor induced by \eqref{compositionfunctorF2} factors through a functor
\[
\xymatrix{
\SH_{KGL_\Q} \ar[d] \ar@{.>}[rr]_-{F_2} && D(\Q_l[\beta^{\pm}]\text{-mod})^{op}   \ar[d]^{\operatorname{forgetful}}\\
 \ar[rr]\SHnc_\Q  && \Sp_\Q^{op},
}
\]
and $F_2$ is symmetric monoidal and admits a right adjoint.
\end{lemma}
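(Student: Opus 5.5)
The plan is to produce the $\Q_l[\beta^{\pm}]$-module structure from a ring structure on the representing object, and to deduce monoidality and the adjoint from formal properties of the two functors in \eqref{compositionfunctorF2}.

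\textbf{Step 1: a ring structure on the target.} The functor $\Dg(k)\to\Sp$, $\sT\mapsto K^S(\sT)$, is lax symmetric monoidal. Localization $\LKl$ is smashing up to the usual caveats, so it is symmetric monoidal on $E$-local objects; rationalization and $l^{nc}_{\AA^1}$ are symmetric monoidal localizations. Hence
\[
E:=(\LKl\, l_{\AA^1}^{nc}K^S)_\Q
\]
is a commutative algebra in $\SHncinf_\Q$, and it receives a unit map $1_{nc,\Q}\to E$. On the base, $E(k)\simeq(\LKl K(k))_\Q$ with $k=\ol{k}$. By Suslin rigidity and Thomason's theorem, $\pi_*\LKl K(k)\simeq \Z_l[\beta^{\pm}]$, so
\[
\pi_*E(k)\simeq \Q_l[\beta^{\pm}],
\]
with $\beta$ in degree $2$. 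Since this graded ring is a field-like periodic ring over $\Q$, the $E_\infty$-ring $E(k)$ is formal. It therefore defines a $\Q_l[\beta^\pm]$-algebra structure, up to homotopy.

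\textbf{Step 2: factoring through modules.} For any $M\in\SHncinf_\Q$, the mapping spectrum $\MapncSpQ(M,E)$ is a module over $\MapncSpQ(1_{nc},E)\simeq E(k)$ via the multiplication on $E$. This gives the dotted arrow on homotopy categories, landing in $D(\Q_l[\beta^\pm]\text{-mod})^{op}$ after precomposing with $L_{KGL_\Q}$.

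\textbf{Step 3: monoidality.} I would argue as follows.
\begin{itemize}
\item $L_{KGL_\Q}$ is symmetric monoidal by \cite{R13}.
\item It remains to see that $M\mapsto \MapncSpQ(M,E)$ is symmetric monoidal on the image of $L_{KGL_\Q}$, i.e.\ that the Künneth map
\[
\MapncSpQ(M,E)\otimes_{E(k)}\MapncSpQ(N,E)\to \MapncSpQ(M\otimes N,E)
\]
is an equivalence.
\item For compact generators $M=L_{KGL_\Q}(X_+(q)\otimes KGL_\Q)$ with $X$ smooth proper, the objects are dualizable. The mapping spectrum is then $E$ evaluated on $\perfdg(X)$ shifted by the Bott twist, and Künneth reduces to $\LKl K(X\times Y)_\Q\simeq \LKl K(X)_\Q\otimes_{E(k)}\LKl K(Y)_\Q$. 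This holds for dualizable objects because $\MapncSpQ(M\otimes N,E)\simeq \MapncSpQ(M,\underline{\Hom}(N,E))$ and $\underline{\Hom}(N,E)\simeq N^\vee\otimes E$.
\item Since the source is compactly generated by these objects (via \cite{R05} and \cite[Proposition 3.28]{R12}), this suffices, provided we restrict to compact objects or use that both sides preserve the relevant colimits.
\end{itemize}

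\textbf{Step 4: the right adjoint.} This goes exactly as for $F_1$. The source is compactly generated. $L_{KGL_\Q}$ preserves colimits, being a left adjoint. $\MapncSpQ(-,E)$ converts colimits to limits, i.e.\ preserves colimits into the opposite category. The forgetful functor to $\Sp_\Q^{op}$ is conservative and preserves colimits. The adjoint functor theorem \cite[Theorem 1.4.4.1]{L18} then applies.

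The main obstacle is Step 3, the Künneth/monoidality statement. The dualizability argument only works on compact objects, and the contravariant mapping functor does not commute with infinite tensor products. So I would first prove monoidality on the compact part, and make sure that this is all that is needed later when comparing with $F_1$.
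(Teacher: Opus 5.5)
\textbf{Steps 1, 2 and 4 are fine, but Step 3 has a genuine gap.} Your Step 2 gets the $E(k)$-module structure directly from the algebra structure on $E$. That is cleaner than the paper's route, which extends the module structure from the generators $M(X)\otimes_{H\Q}KGL_\Q$ by colimit preservation. A minor point on Step 1: $\LKl$ is not smashing. You only need that it is a symmetric monoidal localization for the localized smash product.

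\textbf{The gap.} It is the sentence claiming that the K\"unneth equivalence on generators ``holds for dualizable objects because $\MapncSpQ(M\otimes N,E)\simeq \MapncSpQ(M,\underline{\Hom}(N,E))$ and $\underline{\Hom}(N,E)\simeq N^\vee\otimes E$.'' These identities only move $N$ to the other side. To get from $\MapncSpQ(M,N^\vee\otimes E)$ to $\MapncSpQ(M,E)\otimes_{E(k)}\MapncSpQ(N,E)$ you need a non-formal input, for instance that the $E$-module $N^\vee\otimes E$ lies in the thick subcategory generated by $E$.

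You can see the argument cannot be formal by running it verbatim for the commutative algebra $E=1_{nc,\Q}$, which represents rational algebraic $K$-theory. It would give $K(X\times Y)_\Q\simeq K(X)_\Q\otimes_{K(k)_\Q}K(Y)_\Q$, which is false. All spectra involved are connective, so on $\pi_0$ the K\"unneth map is the exterior product $K_0(C)_\Q\otimes K_0(C)_\Q\to K_0(C\times C)_\Q$. For a curve $C$ of genus $g\geq 1$, the class of $\mathcal{O}(\Delta)$ is not in the image:
\begin{itemize}
\item the divisor part of the image is numerically spanned by the fibres $F_1,F_2$;
\item $\Delta\cdot F_i=1$ would then force $\Delta\equiv F_1+F_2$;
\item but $\Delta^2=2-2g\neq 2=(F_1+F_2)^2$.
\end{itemize}

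\textbf{The missing ingredient} is the one the paper uses. Thomason's \'etale descent theorem identifies $\pi_*\LKl K(X)_\Q$ with $\bigoplus_i H^{2i-*}_{\et}(X,\Q_l(i))$, functorially and multiplicatively in smooth $X$ over $k=\ol{k}$. Under this identification, the K\"unneth map for $E$ becomes the K\"unneth map of $l$-adic cohomology, which is an isomorphism over an algebraically closed field. Since $\Q_l[\beta^{\pm}]$ is a graded field, $\pi_*$ of a tensor product over $E(k)$ is the tensor product of homotopy groups. This gives the equivalence on generators. So it is the $K(1)$-local, \'etale nature of $E$, not dualizability, that makes $F_2$ monoidal.

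\textbf{On non-compact objects.} Your separate worry is well founded. The tensor product $\otimes_{\Q_l[\beta^{\pm}]}$ does not commute with infinite products. So extending from generators by colimit preservation, as the paper does, yields monoidality only on compact objects. That is all the later comparison with $F_1$ uses.
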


\begin{proof}
The functor \eqref{compositionfunctorF2} sends $KGL_\Q$ to $\LKl\,K(k)$; the latter carries a canonical $E_\infty$-algebra structure and identifies with $\Q_l[\beta^{\pm}]$. Moreover, for a smooth $k$-scheme $X$, it sends $M(X)\otimes_{H\Q}KGL_\Q$ to $\LKl\,K(X)$, which is naturally a module over $\LKl\,K(k)$. Since $\SH_{KGL_\Q}$ is compactly generated by the objects $\{\,M(X)\otimes_{H\Q}KGL_\Q \mid X \text{ smooth}\,\}$ and \eqref{compositionfunctorF2} preserves colimits, it follows that for every $x\in \SH_{KGL_\Q}$ the object \eqref{compositionfunctorF2}$(x)$ carries a canonical $\LKl\,K(k)$-module structure. Therefore \eqref{compositionfunctorF2} factors through a functor
\[
F_2:\SH_{KGL_\Q}\longrightarrow D(\Q_l[\beta^{\pm}]\text{-mod})^{op}.
\]
By \cite[Corollary 3.4.4.6]{L18}, the forgetful functor is conservative and preserves colimits. It follows that $F_2$ preserves colimits as well. For every smooth variety $X$ over $k$, Thomason's theorem \cite{T85} yields a functorial isomoprhism
\[
\pi_j \LKl K(X)[1/l] \simeq \bigoplus_{i\in \mathbb Z} H^{2i-j}_{\et}(X,\mathbb Z_l(i))[1/l][2i].
\]
Consequently, for smooth varieties $X$ and $Y$, we obtain the K\"unneth formula for $K(1,l)$-local $K$-theory:
\[
\LKl K(X)[1/l] \otimes_{\LKl K(k)[1/l]} \LKl K(Y)[1/l]
\simeq
\LKl K(X\times_k Y)[1/l].
\]
Therefore, the natural lax symmetric monoidal structure on $F_2$ is an equivalence on smooth varieties. Since $F_2$ preserves colimits, it follows that $F_2$ is symmetric monoidal.\end{proof}

\subsection{Equivalence of $F_1$ and $F_2$} Denote by $a$ the functor $-\otimes_{H\Q} KGL_\Q: \SH_{H\Q} \to \SH_{KGL_\Q}$.
\begin{proposition}
Functors $F_1\circ a $ and $F_2 \circ a$ are equivalence.
\end{proposition}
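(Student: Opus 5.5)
We want to show that the two composites $F_1\circ a,\,F_2\circ a:\SH_{H\Q}\to D(\Q_l[\beta^{\pm}]\text{-mod})^{op}$ are equivalent as symmetric monoidal, colimit-preserving functors. The strategy is to compare them on a set of compact generators and then extend by colimits. Both composites preserve colimits: $a$ is a left adjoint, $F_1$ preserves colimits by the Lemma on $F_1$, and $F_2$ does by the Lemma constructing $F_2$. Both are symmetric monoidal, since $a$, $F_1$ and $F_2$ are. As recalled earlier, $\SH_{H\Q}\simeq \DM(k,\Q)$ is compactly generated by the objects $X_+(q)$ with $X$ smooth proper and $q\in\Z$. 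Tensoring with $KGL_\Q$ absorbs the twist up to shift, since $KGL_\Q(1)[2]\simeq KGL_\Q$ by \eqref{KGLQHQ}. It therefore suffices to produce an equivalence, natural in correspondences, between the two functors restricted to the full subcategory spanned by the motives $M(X)$ of smooth proper $X$. This restriction is rationally Voevodsky's embedding of $\mathrm{Chow}(k,\Q)$, and such an equivalence on a generating subcategory extends to all of $\SH_{H\Q}$ by Kan extension along colimits.

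On objects the comparison is as follows. For smooth proper $X$, we have $F_1(a M(X))=R_l(M(X))\otimes_{\Q_l}\Q_l[\beta^{\pm}]$, which is the $2$-periodized $l$-adic cohomology $\bigoplus_i H^{*}_{\et}(X,\Q_l(i))[2i]$. On the other side, $F_2(aM(X))=(\LKl K(X))_\Q$. Thomason's theorem \cite{T85}, already invoked in the proof of the Lemma on $F_2$, identifies the two. More invariantly, I would build a natural transformation $F_2\circ a\Rightarrow F_1\circ a$ from the étale Chern character. The Dwyer--Friedlander/Thomason map $K(X)\to L_{K(1,l)}K(X)$, composed with the $l$-adic Chern character, is a map of $E_\infty$-ring spectra valued in periodized étale cohomology. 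It is defined at the level of $\SH(k)$ as a map of motivic ring spectra from $KGL_\Q$ to the spectrum representing periodized $l$-adic cohomology, which is the realization of $\bigoplus_m H\Q(m)[2m]$. Because it is a ring map in $\SH(k)$, it is automatically compatible with all morphisms of $KGL_\Q$-modules, including transfers and correspondences, so it yields a transformation between the two functors on $a(\SH_{H\Q})$. Thomason's theorem then says the transformation is an equivalence on $M(X)$ for smooth $X$. Since both sides preserve colimits and these objects generate, it is an equivalence everywhere.

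The main obstacle is the naturality step. Thomason's isomorphism is only a functorial isomorphism on homotopy groups for smooth schemes, while $F_1$ and $F_2$ are defined as functors on a triangulated category. An objectwise isomorphism on generators does not automatically glue. The cleanest fix is to realize the Chern character as a map of ring objects in $\SH(k)$, namely $KGL_\Q\to R_l^{*}\Q_l[\beta^{\pm}]$, where $R_l^*$ denotes the right adjoint of the realization. I would then invoke the Riou/Cisinski--Déglise rigidity of \eqref{KGLQHQ}: any ring map out of $KGL_\Q$ is determined by the Chern character on $\mathrm{Pic}$. That rigidity should force the two $\Q_l[\beta^\pm]$-module structures, and hence the two functors, to agree as symmetric monoidal functors.
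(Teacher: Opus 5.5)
There is a genuine gap at the step you flag yourself, and the proposed fix does not close it, because the Chern character you invoke has the wrong source. By adjunction, $F_2\circ a(A)\simeq \operatorname{Map}_{\SH_{H\Q}}(A,E_2)$ with $E_2:=I\,\sM_{KGL_\Q}\bigl((\LKl\, l^{nc}_{\AA^1}K^S)_\Q\bigr)$. This is the motivic spectrum $X\mapsto(\LKl K(X))_\Q$. By contrast, $F_1\circ a$ is represented by periodized $l$-adic cohomology; call it $E_1$. A ring map $KGL_\Q\to E_1$ only compares $F_1\circ a$ with a third functor, namely rational algebraic $K$-theory $A\mapsto\operatorname{Map}(aA,KGL_\Q)$. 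It does not induce a map between $E_1$ and $E_2$. Indeed, $E_2$ is formed by $K(1)$-localizing before rationalizing, and $E_1$ is rational, hence $K(1)$-acyclic rather than $K(1)$-local. So the universal property of $\LKl$ gives no factorization of $KGL_\Q\to E_1$ through $KGL_\Q\to E_2$. Riou-type rigidity is a statement about maps with source $KGL_\Q$. At best it compares two ring maps into an already fixed target; it says nothing that identifies $E_1$ with $E_2$. Your final sentence therefore presupposes the identification it is meant to produce.

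Your earlier reduction also creates a problem that the paper avoids. Working with Chow motives forces naturality in correspondences, hence compatibility with proper pushforwards. Thomason's descent spectral sequence is built functorially for pullbacks. Compatibility of the resulting rational Chern-character-type isomorphism with pushforwards is a Riemann--Roch statement, which in general requires Todd-class corrections. Those corrections would have to be matched with the normalization of $KGL_\Q\simeq\bigoplus_m H\Q(m)[2m]$.

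The paper sidesteps transfers entirely. Using symmetric monoidality and invertibility of the Tate twist, it reduces to $\DM^{\eff}(k,\Q)$. It then uses Ayoub's equivalence $\DA^{\et,\eff}(k,\Q)\simeq\DM^{\eff}(k,\Q)$, whose compact generators $\Q(X)[i]$ come from $\Sm(k)$ without transfers. So only functoriality under morphisms of smooth varieties is needed, and that is what Thomason's rationally degenerate spectral sequence supplies.

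Alternatively, your representability viewpoint could be salvaged. You would construct a morphism between $E_1$ and $E_2$ directly in $\SH_{H\Q}$, for instance from Thomason's \'etale descent description of $\LKl K$. Checking that it is an equivalence is then objectwise on the generators $\Sigma^\infty X_+(q)$, and no separate naturality argument is needed.
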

\begin{proof}
The category $\DM(k,\Q)$ is obtained from $\DM^{\eff}(k,\Q)$ by inverting the Tate motive $\mathbf{Q}(1)$, or equivalently by stabilizing with respect to Tate twist. In particular, there is a canonical fully faithful functor
\[
i:\DM^{\eff}(k,\Q) \hookrightarrow \DM(k,\Q),
\]
and the Tate twist becomes an autoequivalence on $\DM(k,\Q)$ (see \cite{V10}). Since $F_1\circ a$ and $F_2\circ a$ are symmetric monoidal, it suffices to show that $F_1\circ a\circ i$ and $F_2\circ a\circ i$ are equivalent. By \cite[Theorem B.13]{A14}, there is an equivalence
\[
    \DA^{\et,\eff}(k,\Q) \simeq \DM^{\eff}(k,\Q).
\]
Here, $\DA^{\et,\eff}(k,\Q)$ is obtained from the category of complexes $\mathrm{Cpl}(\mathrm{Shv}_{\et}(\Sm(k),\Q))$ by localizing at $\AA^1$-homotopy equivalences. By \cite[Theorem B.13]{A14}, $\DA^{\et,\eff}(k,\Q)$ is compactly generated by $\Q(X)[i]$ for smooth variety $X$ and $i\in\Z$. Since $F_1$ and $F_2$ preserves colimit, it is suffices to show that 
\[
 \Sm(k) \to \DA^{\et,\eff}(k,\Q) \simeq \DM^\eff(k,\Q) \hookrightarrow \DM(k,\Q) \overset{F_1\circ a}{\underset{F_2\circ a}{\rightrightarrows}} D(\Q_l[\beta^{\pm}]\text{-}\mathrm{mod})^{op}
\]
are equivalence. 

For a smooth variety $X$, we compare the two functors via Thomason's \'etale descent description of $K(1)$-local algebraic $K$-theory \cite{T85}.
More precisely, for a prime $l$ invertible in $k$, there is a strongly convergent spectral sequence
\[
E_2^{p,2q}=H_{\et}^{p}(X,\Z_l(q)) \Longrightarrow \pi_{2q-p}(\LKl K(X)),
\]
with $E_2^{p,2q+1}=0$.
After tensoring with $\Q_l$, this spectral sequence is degenerate. Thus there is an isomorphism $F_1\circ a(M(X)_\Q) \simeq F_2\circ a(M(X)_\Q)$. This spectral sequence is obtained by applying $l$-adic completion and $K(1)$-localization to the \'etale sheafification of the nonconnective $K$-theory presheaf, and then using the Postnikov filtration of the resulting \'etale-local spectrum. 
By construction, this spectral sequence is functorial in smooth varieties. Therefore, the restrictions of $F_1$ and $F_2$ to $\Sm(k)$ are equivalent. 
\end{proof}

    \begin{proposition}
The functors $F_1$ and $F_2$ are equivalent.
\end{proposition}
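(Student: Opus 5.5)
We deduce the equivalence $F_1\simeq F_2$ from the previous proposition, which gives $F_1\circ a\simeq F_2\circ a$, by exploiting that $a=-\otimes_{H\Q}KGL_\Q$ is a free-module functor whose image generates $\SH_{KGL_\Q}$ under colimits.

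\textbf{Step 1: reduce to compact generators.} By \cite[Proposition 3.28]{R12}, used in the lemma on $F_1$, the category $\SH_{KGL_\Q}$ is compactly generated by the free modules $a(M(X)(q)[n])=M(X)\otimes_{H\Q}KGL_\Q(q)[n]$ with $X$ smooth. In fact, since $KGL_\Q(1)[2]\simeq KGL_\Q$ by \eqref{KGLQHQ}, the objects $a(M(X))[n]$ already suffice. Both $F_1$ and $F_2$ preserve colimits, by the two lemmas above. It is therefore enough to produce a natural equivalence between the restrictions of $F_1$ and $F_2$ to the full subcategory spanned by the $a(M(X))$, and then extend it by colimits. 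At the $\infty$-categorical level, this extension is the statement that a colimit-preserving functor out of a compactly generated category is the left Kan extension of its restriction to the compact generators.

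\textbf{Step 2: compare on free modules, including $KGL_\Q$-linear maps.} On objects, $F_i(a(M(X)))$ is identified with $F_i\circ a(M(X))$, and the previous proposition gives $F_1(a(M(X)))\simeq F_2(a(M(X)))$ as $\Q_l[\beta^{\pm}]$-modules. For both functors this object is $\Q_l[\beta^\pm]\otimes_{\Q_l}H^*_{\et}(X,\Q_l)$ after the degeneration of Thomason's spectral sequence.

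The issue is that morphisms between free modules are not all of the form $a(f)$. By adjunction,
\[
\Hom_{KGL_\Q}\bigl(a(M(X)),a(M(Y))\bigr)\simeq \Hom_{H\Q}\bigl(M(X),M(Y)\otimes KGL_\Q\bigr),
\]
so a general morphism is a composite of $a(f)$, for $f$ a map in $\SH_{H\Q}$, with the multiplication map $\mu:KGL_\Q\otimes KGL_\Q\to KGL_\Q$. I would check that $F_1$ and $F_2$ agree on $\mu$. Both functors are symmetric monoidal and send $KGL_\Q$ to the $E_\infty$-algebra $\Q_l[\beta^\pm]$. For $F_2$ this algebra is $\LKl K(k)_\Q$, and for $F_1$ it is $R_l$ of $\bigoplus_m \Q(m)[2m]$. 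So it suffices to see that the identification $\LKl K(k)_\Q\simeq \Q_l[\beta^\pm]$ is multiplicative, with the Bott element matching the generator of $R_l(\Q(1)[2])$. Since $\Q_l[\beta^\pm]$ is a free $E_\infty$-$\Q_l$-algebra on $\beta$ after inverting $\beta$, a multiplicative identification is determined by where $\beta$ goes. Both functors send $\beta$ to the class arising from $\mathbb{P}^1$, that is, from $H^2_{\et}(\mathbb{P}^1,\Q_l(1))$ and $K(\mathbb{P}^1)$. This matching is again supplied by the functoriality of Thomason's spectral sequence applied to $\mathbb{P}^1\to\Spec k$.

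\textbf{Step 3: assemble.} Combining the natural equivalence $F_1\circ a\simeq F_2\circ a$ with the compatibility of the module structures from Step 2, the two functors agree as $\Q_l[\beta^\pm]$-linear functors on free modules. They therefore agree on all of $\SH_{KGL_\Q}$ by Step 1.

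The main obstacle is Step 2: showing that the equivalence of the previous proposition is compatible with the $KGL_\Q$-module structures, and not merely natural in maps coming from $\SH_{H\Q}$. My first attempt would be to upgrade the previous proposition to an equivalence of symmetric monoidal functors, so that $F_i$ is recovered from $F_i\circ a$ as the induced functor on modules over $a(1)=KGL_\Q$, using $\SH_{KGL_\Q}\simeq \mathrm{Mod}_{KGL_\Q}(\SH_{H\Q})$. In that form the statement follows formally from the monoidality of both functors.
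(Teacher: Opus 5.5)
Your Step~1 is the paper's reduction in a different form: the paper passes to the right adjoints $G_1,G_2$ and argues by Yoneda against the compact generators $a(\mathcal G)$, while you left Kan extend from free modules. Your Step~2 isolates a point that the paper's proof passes over. The paper's isomorphisms $\Hom(aC,G_1(M))\cong\Hom(aC,G_2(M))$ are natural only in maps $C\to C'$ of $\SH_{H\Q}$. By adjunction they just express $I\circ G_1\simeq I\circ G_2$ (the right adjoints of $F_1\circ a\simeq F_2\circ a$), that is, an identification of underlying $H\Q$-modules. The paper's own formula $\Hom(aA,aB)\simeq\bigoplus_i\Hom(A,B(i)[2i])$ shows that maps of the form $a(f)$ account only for the summand $i=0$. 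So deducing $G_1\simeq G_2$ from ``an object is determined by its Hom-groups from the $aC$'' tacitly requires naturality for the other maps $\epsilon_{a(N)}\circ a(f)$, where $\epsilon_{a(N)}=1_N\otimes\mu$. An example is the Bott map $b\colon KGL_\Q(1)[2]\xrightarrow{\sim}KGL_\Q$. You have correctly located what needs proof.

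However, your Step~2 does not prove it, so the proposal has a gap exactly at the obstacle you name. That $E_\infty$-maps out of $\Q_l[\beta^{\pm}]$ are determined by the image of $\beta$ is a statement about coefficient algebras. What you need is a property of the particular transformation $\eta\colon F_1\circ a\Rightarrow F_2\circ a$: that it is monoidal and intertwines $F_1(b)$ with $F_2(b)$. Concretely, naturality with respect to $1_{a(M(Y))}\otimes b$ says that $\eta$ intertwines $x\mapsto p^*x\cdot([\mathcal O(1)]-1)$, from $\LKl K(Y)_\Q$ to $\LKl K(Y\times\mathbb P^1)_\Q$, with $y\mapsto p^*y\cup c_1(\mathcal O(1))$ on the \'etale side, up to the appropriate power of $\beta$. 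This is a multiplicativity statement. It does not follow from naturality along morphisms of smooth varieties, which is all the previous proposition provides: its $\eta$ comes from the degeneration of Thomason's spectral sequence, which by itself only identifies associated graded pieces and gives no control over products. Your last paragraph defers exactly this upgrade, so the argument is conditional on it.

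To close the gap, construct $\eta$ from a multiplicative, pullback-natural splitting, for instance an $l$-adic Chern character defined via Adams eigenspaces on $\LKl K(X)_\Q$. Then $\eta$ is symmetric monoidal on compact objects, and you check on $\mathbb P^1$ that the Bott class goes to $c_1(\mathcal O(1))$. Note also that the $F_i$, being contravariant, turn the infinite sum $a(KGL_\Q)\simeq\bigoplus_m KGL_\Q(m)[2m]$ into a product and are not monoidal on such objects. The ``formal'' step via $\SH_{KGL_\Q}\simeq\mathrm{Mod}_{KGL_\Q}(\SH_{H\Q})$ should therefore be run with the invertible compact object $H\Q(1)[2]$ and its Bott trivialization, not with an $E_\infty$-map out of $F_i(a(KGL_\Q))$.
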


\begin{proof}
Let $G_1$ and $G_2$ be right adjoints of $F_1$ and $F_2$, respectively.

We first note that for any $A,B\in \SH_{H\Q}$, one has
\begin{align*}
\Hom_{\SH_{KGL_\Q}}(aA,aB)
&\simeq \Hom_{\SH_{H\Q}}(A,I(aB)) \\
&\simeq \Hom_{\SH_{H\Q}}(A,B\otimes_{H\Q}KGL_\Q) \\
&\simeq \Hom_{\SH_{H\Q}}\Bigl(A,\bigoplus_{i\in\mathbb Z} B(i)[2i]\Bigr) \\
&\simeq \bigoplus_{i\in\mathbb Z}\Hom_{\SH_{H\Q}}(A,B(i)[2i]),
\end{align*}
where we used \eqref{KGLQHQ}. In particular, if $\mathcal G$ is a set of compact generators of $\SH_{H\Q}$, then $a(\mathcal G)$ is a set of compact generators of $\SH_{KGL_\Q}$.

By the previous proposition, we have a natural equivalence
\[
F_1\circ a \simeq F_2\circ a.
\]
Hence, for every $C\in \mathcal G$ and every object $M\in D(\Q_l[\beta^{\pm}]\text{-}\mathrm{mod})^{op}$, we obtain
\begin{align*}
\Hom_{\SH_{KGL_\Q}}(aC,G_1(M))
&\simeq \Hom_{D(\Q_l[\beta^{\pm}]\text{-}\mathrm{mod})^{op}}(F_1(aC),M) \\
&\simeq \Hom_{D(\Q_l[\beta^{\pm}]\text{-}\mathrm{mod})^{op}}(F_2(aC),M) \\
&\simeq \Hom_{\SH_{KGL_\Q}}(aC,G_2(M)).
\end{align*}
These isomorphisms are functorial in both $C$ and $M$.

Since $a(\mathcal G)$ compactly generates $\SH_{KGL_\Q}$, an object of $\SH_{KGL_\Q}$ is determined by its Hom-groups from the objects $aC$ with $C\in\mathcal G$. Therefore, for every $M$, the above functorial isomorphisms imply
\[
G_1(M)\simeq G_2(M).
\]
Thus the right adjoints are naturally equivalent:
\[
G_1\simeq G_2.
\]
It follows that their left adjoints are naturally equivalent as well. Hence
\[
F_1\simeq F_2.
\]
This proves the proposition.
\end{proof}

\subsection{Realizations of $\sM(\sT)$} First, we prove Theorem~\ref{ladicrealizationthm}.
\begin{proof}[Proof of Theorem~\ref{ladicrealizationthm}]
Set
\[
E_{\sT}:=\sM_{KGL_\Q}\bigl(\sU(\sT)\bigr)\in \SH_{KGL_\Q}.
\]
By definition,
\[
\sM(\sT)=H^{-1}\circ I(E_{\sT}).
\]

We now evaluate the two functors $F_1$ and $F_2$ on $E_{\sT}$.

First, by the definition of $F_1$, after applying the forgetful functor
\[
D(\Q_l[\beta^{\pm}]\text{-}\mathrm{mod})^{op}\longrightarrow D(\Q_l)^{op},
\]
we obtain
\[
F_1(E_{\sT}) \simeq R_l\bigl(H^{-1}I(E_{\sT})\bigr)
= R_l\bigl(\sM(\sT)\bigr).
\]

Next, since $\sT$ admits a geometric realization, $\sU(\sT)$ belongs to the essential image of the fully faithful functor
\[
L_{KGL_\Q}:\SH_{KGL_\Q}\hookrightarrow \SHnc_\Q.
\]
Hence the counit
\[
L_{KGL_\Q}(E_{\sT})=L_{KGL_\Q}\sM_{KGL_\Q}\bigl(\sU(\sT)\bigr)\longrightarrow \sU(\sT)
\]
is an equivalence. Therefore, by the definition of $F_2$, we have
\begin{align*}
F_2(E_{\sT})
&\simeq
\MapncSpQ\!\bigl(L_{KGL_\Q}(E_{\sT}),(\LKl\, l_{\AA^1}^{nc}K^S)_\Q\bigr) \\
&\simeq
\MapncSpQ\!\bigl(\sU(\sT),(\LKl\, l_{\AA^1}^{nc}K^S)_\Q\bigr)\\
&\simeq (\LKl K(\sT))_\Q.
\end{align*}

On the other hand, by the previous proposition, the functors $F_1$ and $F_2$ are naturally equivalent. Evaluating this equivalence at $E_{\sT}$, we obtain
\[
R_l\bigl(\sM(\sT)\bigr)\simeq (\LKl K(\sT))_\Q
\]
in $D(\Q_l)^{op}$.

Taking the $i$-th cohomology group on the left and the $i$-th homotopy group on the right, we conclude that
\[
H^i\Bigl(R_l\bigl(\sM(\sT)\bigr)\Bigr)\simeq \pi_i\LKl(\sT).
\]
This proves the theorem.
\end{proof}

Similarly, the corresponding theorem also holds for Betti realization and de Rham realization.
\begin{thm}
Let $\sT$ be a smooth proper $\mathbb{C}$-linear dg category (resp. a smooth proper $k$-linear dg category, where $\operatorname{char}(k)=0$ and $k$ is not assumed to be algebraically closed) admitting a geometric realization. Then the Betti realization (resp. the de Rham realization) of $\sM(\sT)$ satisfies
$$
H^i\Bigl(R_B\bigl(\sM(\sT)\bigr)\Bigr)\simeq \pi_i\Ktop(\sT)
\qquad
\Bigl(\text{resp. } H^i\Bigl(R_{\mathrm{dR}}\bigl(\sM(\sT)\bigr)\Bigr)\simeq \pi_i\HP(\sT)\Bigr).
$$
\end{thm}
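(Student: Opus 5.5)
The plan is to copy the argument for Theorem~\ref{ladicrealizationthm} verbatim, swapping the $l$-adic inputs for their Betti and de Rham counterparts. For the Betti case ($k=\bC$) I take the Betti realization $R_B:\DM(\bC,\Q)\to D(\Q)^{op}$. Since $R_B\circ H^{-1}$ sends $KGL_\Q\simeq\bigoplus_m H\Q(m)[2m]$ to $\Q[\beta^{\pm}]$, the composite $R_B\circ H^{-1}\circ I$ factors through a functor
\[
F_1^B:\SH_{KGL_\Q}\to D(\Q[\beta^\pm]\text{-mod})^{op}.
\]
It preserves colimits and so has a right adjoint, by the same argument as for $F_1$.

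For $F_2^B$ I use Blanc's topological $K$-theory $\Ktop(-)_\Q$. It is localizing and $\AA^1$-invariant, so it defines an object of $\SHncinf_\Q$, and I set
\[
F_2^B=\MapncSpQ\!\bigl(L_{KGL_\Q}(-),\Ktop_\Q\bigr).
\]
The unit $KGL_\Q$ goes to $\Ktop(\bC)_\Q=\Q[\beta^\pm]$, which gives the module structure, and $a(M(X))$ goes to $\Ktop(\perfdg(X))_\Q\simeq KU(X^{an})_\Q$. For symmetric monoidality I need a Künneth formula on smooth varieties. Rationally this follows from the Atiyah--Hirzebruch degeneration $KU(X^{an})_\Q\simeq\bigoplus_i H^{2i-*}(X^{an},\Q)$ and the Künneth theorem for singular cohomology.

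To compare the two functors on $a(M(X))$, I use the rational Chern character $KU(X^{an})_\Q\simeq H^{*}(X^{an},\Q)[\beta^\pm]$. It is natural in maps of smooth varieties, so the restrictions of $F_1^B\circ a$ and $F_2^B\circ a$ to $\Sm(\bC)$ agree. Passing through $\DA^{\et,\eff}\simeq\DM^{\eff}$ and the Tate-twist stabilization extends this to $F_1^B\circ a\simeq F_2^B\circ a$. The right-adjoint argument of the proposition above, which uses that $a(\mathcal G)$ compactly generates $\SH_{KGL_\Q}$, then gives $F_1^B\simeq F_2^B$. Finally I evaluate at $E_\sT=\sM_{KGL_\Q}\sU(\sT)$. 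The counit $L_{KGL_\Q}E_\sT\to\sU(\sT)$ is an equivalence, so $R_B\sM(\sT)\simeq\Ktop(\sT)_\Q$, which yields the stated isomorphism on $H^i$ and $\pi_i$.

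The de Rham case goes the same way with coefficients $k[u^\pm]$, where $u$ has degree $2$. The functor $F_1^{\dR}$ comes from $R_{\dR}$. For $F_2^{\dR}$ I use periodic cyclic homology $\HP$, which is localizing and, by Kassel/Goodwillie, $\AA^1$-invariant in characteristic $0$; I view it as a $\Q$-linear spectrum-valued invariant. Here I must handle $k$ not being algebraically closed. I plan to observe that the comparison never used $k=\ol{k}$ beyond the étale input, and that the compact-generation results (\cite{A14}, \cite{R05}) hold over any field of characteristic zero. The comparison on $\Sm(k)$ is the Feigin--Tsygan/Weibel isomorphism $\HP_*(X)\simeq\prod_i H^{2i-*}_{\dR}(X/k)$, which gives a $k[u^\pm]$-module equivalence. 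For smooth varieties the product is finite, so it is a sum, and the Künneth formula for de Rham cohomology gives monoidality.

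The main obstacle is functoriality. The Chern character, respectively the HKR-type decomposition, must be natural for all morphisms in $\DM$, including correspondences, and not only for morphisms of varieties. It must also be compatible with the monoidal structures. The paper's argument only checks it on $\Sm(k)$ and extends by colimits. I would follow the same route: both functors are colimit-preserving and factor through $\DA^{\et,\eff}$, whose generators come from $\Sm(k)$, so naturality on $\Sm(k)$ should suffice. To make this precise I would realize the Chern character and the Weibel decomposition as maps of sheaves of spectra on $\Sm(k)$ through the Postnikov/weight filtration, as was done for Thomason's spectral sequence.
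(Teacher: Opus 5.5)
Your proposal is correct and is essentially the paper's own argument. You rerun the $l$-adic proof with $R_B$ (resp.\ $R_{\dR}$) in place of $R_l$ and $\Ktop$ (resp.\ $\HP$) in place of $\LKl l^{nc}_{\AA^1}K^S$, compare the two functors on smooth varieties via Atiyah--Hirzebruch (resp.\ Connes/Feigin--Tsygan), and extend by colimits, Tate stabilization and the right-adjoint argument. Your remarks on why algebraic closedness is unnecessary in the de Rham case, and on needing homotopy-coherent naturality of the comparison on $\Sm(k)$, make explicit what the paper's one-paragraph sketch leaves implicit, without changing the route.
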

\begin{proof}
The proof is obtained by the same argument as in the $l$-adic case, replacing the $l$-adic \'etale realization functor by the Betti realization functor $\DM(\bC,\Q) \to D(\Q)^{op}$ (resp. by the de Rham realization functor $\DM(k,\Q) \to D(k)^{op}$), and replacing $\LKl l^{nc}_{\AA^1}K^S$ by $\Ktop$ (resp. by $\HP$). One then uses Atiyah--Hirzebruch's result \cite{AH61}
\[
\pi_j\Ktop(X^{an})_{\Q}\simeq \bigoplus_{i\in\Z} H^{j+2i}(X^{an},\Q),
\]
(resp. Connes, Feigin--Tsygan's theorem \cite{C83}, \cite{FT87}
\[
\HP_j(X/k) \simeq \bigoplus_{i\in\Z} H^{j+2i}_{\dR}(X/k))
\]
 to conclude by the same formal argument.
\end{proof}
\section{Phantom and motives}In this section, we fix a field $k$ and fix a prime number $l$ with $l\in k^*$.
Let $\sT$ be a smooth proper dg-category which admits a geometrical realization 
\[
    \sT \hookrightarrow \perf(X).
    \]
\begin{lemma}\label{lem3.1}
The motive $\sM(\sT) \in \DM(k,\Q)$ is zero if and only if $\sT$ is motivic quasi-phantom.
\end{lemma}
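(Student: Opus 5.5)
We argue through the chain of adjunctions that defines $\sM$. First, $\sM(\sT)=0$ in $\DM(k,\Q)$ if and only if $E_{\sT}:=\sM_{KGL_\Q}(\sU(\sT))=0$ in $\SH_{KGL_\Q}$. Indeed, $H$ is an equivalence, and the forgetful functor $I:\SH_{KGL_\Q}\to\SH_{H\Q}$ is conservative, so $I(E_\sT)=0$ forces $E_\sT=0$. Next, $\sU(\sT)$ lies in the essential image of the fully faithful functor $L_{KGL_\Q}$, since it is a retract of $\sU(\perfdg(X))$, which is the image of $M(X)\otimes KGL_\Q$; this was already used in the proof of Theorem~\ref{ladicrealizationthm}. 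The counit $L_{KGL_\Q}E_\sT\to\sU(\sT)$ is therefore an equivalence. Hence $E_\sT=0$ if and only if $\sU(\sT)_\Q=0$ in $\SHnc_\Q$. This reduces the lemma to the statement that the rational noncommutative motive vanishes if and only if $K_0(\sT\otimes Y)_\Q=0$ for all smooth proper $Y$.

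For the direction ``$\sU(\sT)_\Q=0\Rightarrow$ motivic quasi-phantom'', note that for smooth proper $Y$, $\perfdg(Y)$ is smooth proper, so $\sT\otimes\perfdg(Y)$ is smooth and admits a geometric realization in $\perfdg(X\times Y)$. Its $K$-theory is therefore $\AA^1$-invariant (by the argument of the lemma identifying $K(\sT)\simeq l^{nc}_{\AA^1}K^S(\sT)$), and so
\[
K_0(\sT\otimes Y)_\Q\simeq \pi_0\MapncSpQ\bigl(\sU(\sT)\otimes\sU(\perfdg(Y)),1_{nc}\bigr)=0,
\]
using that $\sU$ is symmetric monoidal.

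For the converse, suppose $K_0(\sT\otimes Y)_\Q=0$ for all smooth proper $Y$. We show that $E_\sT$ is orthogonal to a set of compact generators of $\SH_{KGL_\Q}$. By Riou's result quoted above, $\SH_{H\Q}$ is compactly generated by $X_+(q)$ with $X$ smooth proper, so $\SH_{KGL_\Q}$ is generated by $a(Y_+(q)[n])$ with $Y$ smooth proper. Since $KGL_\Q$ is $(1)[2]$-periodic, only the shifts $a(Y_+)[n]$ matter. By adjunction and full faithfulness,
\[
\Hom(a(Y_+)[n],E_\sT)\simeq\Hom_{\SHnc_\Q}\bigl(\sU(\perfdg(Y))[n],\sU(\sT)\bigr).
\]
Since $\perfdg(Y)$ is smooth proper, it is dualizable in $\SHnc$ with dual $\sU(\perfdg(Y)^{op})$. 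The group above is then $\pi_n$ of the rational $K$-theory of $\perfdg(Y)^{op}\otimes\sT\simeq\sT\otimes\perfdg(Y)$. Here we use Robalo's computation $\Hom(\sU(A),\sU(B))\simeq K(A^{op}\otimes B)$ for $A$ smooth proper, valid up to $\AA^1$-localization, which again agrees with $K$ by the smoothness argument above. For $n=0$ this is $K_0(\sT\otimes Y)_\Q=0$. For $n>0$ we use Bass' fundamental theorem in the form $K_{-n}(\mathcal A)\simeq$ a summand of $K_0(\mathcal A\otimes \mathbb G_m^{n})$, or equivalently that $\sU(\perfdg(Y))\otimes$ (suspensions of $\mathbb{G}_m$) splits. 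Negative $K$-groups vanish for smooth categories, and positive $K_n$ are handled through the splitting $\sU(\perfdg(Y\times\mathbb P^1))\simeq \sU(\perfdg(Y))^{\oplus 2}$. These do not produce shifts directly, so instead we pass to the generation statement itself: the thick subcategory generated by $\{a(Y_+)\}$ closed under shifts equals that generated by their images, and orthogonality only at degree $0$ for all $Y$ suffices if we use $Y\times\mathbb G_m$-type objects. This is the step I expect to be the main obstacle, namely controlling $K_n(\sT\otimes Y)_\Q$ for $n\neq0$ from the degree-zero hypothesis. The first thing I would try is to express $K_n(\sT\otimes Y)_\Q$ via $\mathbb G_m^{\wedge n}$-contractions, which reduces to $K_0$ of $\sT\otimes Y'$ with $Y'$ smooth but not proper, combined with localization sequences relative to smooth compactifications $Y\times(\mathbb P^1)^n$. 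If that fails, I would use the weight/Chow-motive description: $E_\sT$ is a direct summand of $a(M(X))$ cut out by a correspondence, and its vanishing is detected on Chow groups $\Hom(a(Y_+),-)$ in degree $0$ alone, because summands of pure motives are determined by $\mathrm{CH}^*_\Q$ of products.
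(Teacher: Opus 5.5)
Your reduction to the vanishing of $\sU(\sT)_\Q$ and your forward direction are fine. The converse, however, has a genuine gap, and it is the one you flag yourself. Testing $E_\sT$ against the compact generators $a(Y_+)[n]$ requires $\Hom(a(Y_+)[n],E_\sT)\cong K_n(\perfdg(Y)^{op}\otimes\sT)_\Q$ to vanish for \emph{every} $n$, whereas the hypothesis only gives $n=0$. None of your proposed repairs supplies the other degrees:
\begin{itemize}
\item smashing with $\mathbb{G}_m$ only moves you to negative degrees, which already vanish by regularity; moreover $Y\times\mathbb{G}_m$ is not proper, so the hypothesis does not even apply to it;
\item the splitting for $Y\times\mathbb{P}^1$ produces no shift.
\end{itemize}
Orthogonality to generators in degree $0$ alone does not force an arbitrary object to vanish, so this route stalls.

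The missing idea is that $E_\sT$ is a retract of one specific object, so a single Hom group suffices. You already noted that $\sU(\sT)$ is a retract of $\sU(\perfdg(X))$. Applying $\sM_{KGL_\Q}$, $E_\sT$ is a retract of $a(M(X)_\Q)$, with maps $j\colon E_\sT\to a(M(X)_\Q)$ and $p\colon a(M(X)_\Q)\to E_\sT$ satisfying $p\circ j=\mathrm{id}_{E_\sT}$. By your own formula with $Y=X$ and $n=0$,
\[
p\in\Hom(a(M(X)_\Q),E_\sT)\cong K_0(\perfdg(X)^{op}\otimes\sT)_\Q\cong K_0(\sT\otimes X)_\Q=0.
\]
Hence $p=0$ and $\mathrm{id}_{E_\sT}=p\circ j=0$.

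This is exactly the paper's argument, phrased in $\DM(k,\Q)$. There, the hypothesis for the single variety $Y=X$ kills $\Hom_{\DM(k,\Q)}(M(X)_\Q(n)[2n],\sM(\sT))$ for all $n$. So the projection $\bigoplus_{i\in\Z}M(X)_\Q(i)[2i]\to\sM(\sT)$ vanishes on each summand, hence is zero, and therefore $\sM(\sT)=0$.

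Your closing fallback about detecting vanishing ``in degree $0$ alone'' points in this direction, but as written it is not yet an argument. It appeals to Chow groups of products of pure motives, while $\sM(\sT)$ is a summand of an infinite sum. The retraction step is what makes it rigorous, and with it no higher or negative $K$-groups are needed.
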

\begin{proof}
Now
\[
\sM(\sT)=H^{-1}\circ I\circ \sM_{KGL_\Q}(\sU(\sT))
\]
is a direct summand of $\bigoplus_{i\in\Z}M(X)_\Q(i)[2i]$.

Let $Y$ be a smooth projective variety over $k$. By adjunction and the definition of
$\sM(\sT)$, we have
\begin{align*}
\Hom_{\SH_{KGL_\Q}}\bigl(M(Y)_\Q\otimes KGL_\Q,\sM_{KGL_\Q}(\sU(\sT))\bigr)
&\simeq
\Hom_{\DM(k,\Q)}\bigl(M(Y)_\Q,\sM(\sT)\bigr).
\end{align*}
On the other hand, by Kontsevich's formula for mapping spectra in noncommutative motives, proved by Robalo \cite[Corollary 1.13]{R13}, the left-hand side identifies with
\[
K_0(\perfdg(Y)^{op}\otimes_k \sT)_\Q.
\]
Since $Y$ is smooth and projective, $\perfdg(Y)^{op}$ is Morita equivalent to $\perfdg(Y)$.
Therefore
\[
K_0(\sT\otimes_k \perfdg(Y))_\Q
\simeq
\Hom_{\SH_{KGL_\Q}}\bigl(M(Y)_\Q\otimes KGL_\Q,\sM_{KGL_\Q}(\sU(\sT))\bigr).
\]
Using the decomposition
\[
KGL_\Q \simeq \bigoplus_{n\in\mathbb Z} H\Q(n)[2n],
\]
we obtain
\[
M(Y)_\Q\otimes KGL_\Q
\simeq
\bigoplus_{n\in\mathbb Z} M(Y)_\Q(n)[2n].
\]
Hence
\[
K_0(\sT\otimes_k \perfdg(Y))_\Q
\simeq
\bigoplus_{n\in\mathbb Z}
\Hom_{\DM(k,\Q)}\bigl(M(Y)_\Q(n)[2n],\sM(\sT)\bigr).
\]

If $\sM(\sT)=0$, then clearly
\[
K_0(\sT\otimes_k \perfdg(Y))_\Q=0
\]
for every smooth projective $Y$.

Conversely, assume that
\[
K_0(\sT\otimes_k \perfdg(X))_\Q=0.
\]
Then we have
\[
\Hom_{\DM(k,\Q)}\bigl(M(X)_\Q(n)[2n],\sM(\sT)\bigr)=0.
\]
Since $\sM(\sT)$ is a direct summand of $\bigoplus_{i\in\Z}M(X)_\Q(i)[2i]$, it follows
from Yoneda's lemma that
\[
\sM(\sT)=0.
\]
This proves the proposition.
\end{proof}
\begin{lemma}\label{exlem}
    Given a field extension $L/k$. Then $\sT$ is motivic quasi-phantom if and only if $\sT_{L}$ is motivic quasi-phantom.
\end{lemma}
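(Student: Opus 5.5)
We use Lemma~\ref{lem3.1} on both sides, so the claim becomes: $\sM(\sT)=0$ in $\DM(k,\Q)$ if and only if $\sM(\sT_L)=0$ in $\DM(L,\Q)$. Here $\sT_L\hookrightarrow\perfdg(X_L)$ is the base-changed geometric realization; it is again admissible because the adjoints $l_\sT$ and $i_\sT$, and the idempotent $e_\sT$, base change.

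\textbf{Compatibility with base change.} Let $f:\Spec L\to\Spec k$ and consider $f^*:\DM(k,\Q)\to\DM(L,\Q)$. Since $f^*$ is symmetric monoidal, it sends $KGL_\Q$ to $KGL_\Q$, and it is compatible with the decomposition \eqref{KGLQHQ}. Write $\sM(\sT)$ as the image of the idempotent on $\bigoplus_i M(X)_\Q(i)[2i]$ induced by the correspondence $e_\sT$; concretely, the class of the kernel of $e_\sT$ in $K_0(X\times X)_\Q$, viewed through \eqref{KGLQHQ}. Under $f^*$ this kernel pulls back to the kernel of $e_{\sT_L}$. We therefore expect $f^*\sM(\sT)\simeq\sM(\sT_L)$. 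This gives the implication ``$\sT$ motivic quasi-phantom $\Rightarrow$ $\sT_L$ motivic quasi-phantom.'' One could also see this directly: $K_0(\sT_L\otimes_L Y)_\Q$ for $Y$ over $L$ is a colimit over finitely generated subextensions, and a specialization argument reduces to $Y$ defined over $k$.

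\textbf{The converse.} We use the hom-group description from the proof of Lemma~\ref{lem3.1}:
\[
\Hom_{\DM(k,\Q)}\bigl(M(X)_\Q(n)[2n],\sM(\sT)\bigr)
\]
is a summand of $K_0(\sT\otimes_k\perfdg(X))_\Q$, hence of $K_0(X\times_k X)_\Q$. By the final argument of Lemma~\ref{lem3.1}, it suffices to show that the idempotent $[e_\sT]\in K_0(X\times X)_\Q$, which cuts out $\sM(\sT)$ as an endomorphism, is zero whenever its image in $K_0(X_L\times X_L)_\Q$ is zero. Equivalently, we need injectivity of $K_0(X\times X)_\Q\to K_0(X_L\times X_L)_\Q$ restricted to this summand.

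\textbf{Main obstacle: injectivity.} Rationally, $K_0\otimes\Q\cong CH^*\otimes\Q$, so the question is injectivity of rational Chow groups under field extension. This holds by the standard norm argument:
\begin{itemize}
\item For a finite extension, $f_*f^*=\deg(L/k)$, which is invertible in $\Q$.
\item For a purely transcendental extension, restriction to the generic point of affine space is injective after specializing to a $k$-rational point. Concretely, $CH(Z)\to CH(Z_{k(t)})$ is split by specialization, or by a limit over open subsets of $\AA^n$ combined with homotopy invariance.
\item A general $L$ is a filtered colimit of finitely generated extensions, each of which is finite over a purely transcendental one.
\end{itemize}
Combining these, the endomorphism $e_\sT$ of $\sM(\sT)$ (the identity on $\sM(\sT)$) vanishes over $L$ only if it vanishes over $k$. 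Hence $\sM(\sT_L)=0$ implies $\sM(\sT)=0$, and Lemma~\ref{lem3.1} finishes the proof.
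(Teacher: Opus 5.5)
Your proof is correct, but for the nontrivial direction it takes a different route from the paper.

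\textbf{The paper's route.} It handles both implications at once. Base-changing the idempotent on $\bigoplus_i M(X)_\Q(i)[2i]$ identifies $\sM(\sT_L)$ with the base change of $\sM(\sT)$. It then cites Cisinski--D\'eglise for the fact that the base-change functor $\DM(k,\Q)\to\DM(L,\Q)$ is conservative and preserves colimits. Hence $\sM(\sT)=0$ iff $\sM(\sT_L)=0$, and Lemma~\ref{lem3.1} finishes.

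\textbf{Your route.} You avoid conservativity in $\DM$ altogether. Using full faithfulness of $L_{KGL_\Q}$ and the Kontsevich--Robalo formula, you reduce the vanishing of $\sM(\sT)$ to the vanishing of the single class $[e_\sT]\in K_0(X\times_k X)_\Q\cong\operatorname{End}_{\SHnc_\Q}(\sU(X))$. This class base-changes to $[e_{\sT_L}]$ because Fourier--Mukai kernels base-change. You then prove injectivity of $K_0(-)_\Q\cong CH^*(-)_\Q$ under field extensions by transfer (finite case), specialization (purely transcendental case) and continuity (general case).

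\textbf{Comparison.} The paper's argument is shorter and yields the stronger identification of $\sM(\sT_L)$ with the base change of $\sM(\sT)$. However, it uses conservativity of base change on a non-compact object of $\DM(k,\Q)$ as a black box. Your argument is elementary and self-contained. It is in effect the mechanism behind that conservativity, applied only to the single smooth projective variety $X\times_k X$ where it is needed.

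\textbf{Two small remarks.}
\begin{itemize}
\item Your $K_0$ criterion makes the forward direction immediate, since the image of $0$ is $0$. So the hedged ``we expect $f^*\sM(\sT)\simeq\sM(\sT_L)$'' and the vague specialization argument over $L$ are unnecessary.
\item Over a finite field, an open subset of $\AA^n$ may have no $k$-rational point. You should therefore use Fulton's specialization map for the discrete valuation ring $k[t]_{(t)}$ (your ``split by specialization'' option), or a closed point combined with the finite-extension case, rather than specializing at a rational point of the open set.
\end{itemize}
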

\begin{proof}
Let
\[
i_{\sT}:\sT\hookrightarrow \perfdg(X)
\]
be an admissible embedding, and let
\[
e_{\sT}:\perfdg(X)\to \perfdg(X)
\]
be the associated idempotent endofunctor. As explained above, in $\SHnc_\Q$ one has
\[
\sU(\sT)\simeq \colim\bigl(\sU(\perfdg(X)),\sU(e_{\sT}^{op})\bigr).
\]
Applying the functor
\[
H^{-1}\circ I\circ \sM_{KGL_\Q}
\]
and using
\[
I\bigl(M(X)_\Q\otimes KGL_\Q\bigr)
\simeq
M(X)_\Q\otimes KGL_\Q
\simeq
\bigoplus_{i\in\mathbb Z} M(X)_\Q(i)[2i],
\]
we obtain an idempotent endomorphism
\[
f:\bigoplus_{i\in\mathbb Z} M(X)_\Q(i)[2i]
\longrightarrow
\bigoplus_{i\in\mathbb Z} M(X)_\Q(i)[2i]
\]
such that
\[
\sM(\sT)\simeq
\colim\Bigl(\bigoplus_{i\in\mathbb Z} M(X)_\Q(i)[2i],\,f\Bigr).
\]

After base change to $L$, the admissible subcategory $\sT_L\subset \perfdg(X_L)$ is defined by the base-changed idempotent $(e_{\sT})_L$. Hence the same construction yields
\[
\sM(\sT_L)\simeq
\colim\Bigl(\bigoplus_{i\in\mathbb Z} M(X_L)_\Q(i)[2i],\,f_L\Bigr),
\]
where $f_L$ denotes the image of $f$ under the base-change functor
\[
\DM(k,\Q)\to \DM(L,\Q).
\]
By \cite[Theorem 14.3.3 and Proposition 4.2.4]{CD19}, the functor is conservative and preserves colimit. Thus, by Lemma~\ref{lem3.1}, we obtain the claim.
\end{proof}
\begin{thm}\label{criterionphantominsection3}
Let $k$ be a field, and let $\sT$ be a $k$-linear smooth proper dg category admitting a geometric realization $\sT \hookrightarrow \perfdg(X)$. Assume that $X$ has Kimura-finite Chow motive. Then:
\begin{itemize}
\item[(1)] Let $l$ be a prime such that $l\in k^*$. If $\LKl K(\sT_{\ol{k}})_\Q=0$, then $\sT$ is a motivic quasi-phantom.
\item[(2)] Assume $\ch(k)=0$. If $\HH_*(\sT/k)=0$, then $\sT$ is a motivic quasi-phantom.
\item[(3)] Assume $k=\bC$. If $\Ktop(\sT)_{\Q}=0$, then $\sT$ is a motivic quasi-phantom.
\end{itemize}
\end{thm}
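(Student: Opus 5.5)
The plan is to reduce all three statements to the vanishing of the motive $\sM(\sT)\in\DM(k,\Q)$, using Lemma~\ref{lem3.1}. That vanishing will come from two inputs: the vanishing of a realization of $\sM(\sT)$, and Kimura finiteness.

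\textbf{Reduction to $\ol{k}$ in case (1).} By Lemma~\ref{exlem} it suffices to show that $\sT_{\ol{k}}$ is a motivic quasi-phantom. Kimura finiteness is preserved under base change, so $X_{\ol{k}}$ still has Kimura-finite Chow motive. We may therefore assume $k=\ol{k}$. By Theorem~\ref{ladicrealizationthm}, the hypothesis $\LKl K(\sT)_\Q=0$ gives $R_l(\sM(\sT))=0$. In case (3), the Betti analogue gives $R_B(\sM(\sT))=0$. In case (2), $\HH_*(\sT/k)=0$ implies $\HP_*(\sT)=0$: since $\sT$ is smooth and proper, Hochschild–Kostant–Rosenberg together with the degeneration of the Hodge-to-de Rham spectral sequence (Kaledin) makes $\HP$ a $2$-periodization of $\HH$. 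Hence $R_{\dR}(\sM(\sT))=0$.

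\textbf{Passing to Chow motives.} By Lemma~\ref{exlem} (proof), $\sM(\sT)$ is the image of an idempotent $f$ on $\bigoplus_i M(X)_\Q(i)[2i]$. Since $X$ is smooth proper, the objects $M(X)(i)[2i]$ lie in the image of Chow motives. Morphisms between them are $\bigoplus_n \mathrm{CH}^{*}(X\times X)_\Q$-components, coming from the computation $\Hom(aA,aB)=\bigoplus\Hom(A,B(n)[2n])$. The key point is that $f$ decomposes into components. The endomorphism ring of $\bigoplus_i M(X)(i)[2i]$ in $\SH_{KGL_\Q}$ is $\bigoplus_n \mathrm{CH}^{\dim X+n}(X\times X)_\Q$, that is, "$\Z/2$-graded-by-Tate" correspondences, living in the orbit category $\mathrm{Chow}_\Q/(-\otimes\Q(1)[2])$. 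I would therefore work in this orbit category, where $\sM_{KGL_\Q}\sU(\sT)$ is the image of an idempotent $\epsilon$ in $\mathrm{End}(h(X))=\bigoplus_n \mathrm{CH}^{\dim X+n}(X\times X)_\Q$. Kimura finiteness of $h(X)$ implies (Kimura, André–Kahn) that the ideal of numerically trivial endomorphisms of $h(X)$ is nilpotent. Passing to the orbit category preserves this, because the endomorphism algebra is a finite sum of Hom groups of the Kimura-finite motive $\bigoplus_{|n|\le \dim X}h(X)(n)$, and nilpotence of the numerical ideal holds there.

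\textbf{Conclusion via homological triviality.} The realization of $\epsilon$ equals the realization of $\sM(\sT)$, which is $0$. So $\epsilon$ is homologically trivial, hence numerically trivial, since homological and numerical equivalence are compared through the trace, which factors through the Weil cohomology. An idempotent lying in a nilpotent ideal is zero, so $\epsilon=0$. Then $\sM_{KGL_\Q}\sU(\sT)=0$, so $\sM(\sT)=0$, and Lemma~\ref{lem3.1} gives that $\sT$ is a motivic quasi-phantom.

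The main obstacle is this middle step: making rigorous that the idempotent lives in a ring to which Kimura's nilpotence theorem applies. Concretely, one must show that $\mathrm{End}_{\SH_{KGL_\Q}}(M(X)\otimes KGL_\Q)$ is $\bigoplus_n\mathrm{CH}(X\times X)_\Q$ with only finitely many nonzero summands, and that its numerically trivial part is a nilpotent two-sided ideal. I would first try to realize it as a corner $\mathrm{End}(\bigoplus_{n=-d}^{d}h(X)(n))$ modulo the Tate identification, and apply Kimura's theorem there.
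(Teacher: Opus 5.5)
Your overall chain (vanishing realization $\Rightarrow$ homologically trivial $\Rightarrow$ numerically trivial $\Rightarrow$ nilpotent by Kimura $\Rightarrow$ the idempotent is zero) is the paper's. However, the step you flag as the main obstacle is a genuine gap, and the remedy you suggest does not work as stated. Write $d=\dim X$, $F=[-d,d]$ and $P=\bigoplus_{n\in F}M(X)_\Q(n)[2n]$. The orbit ring $R=\bigoplus_{n\in F}\mathrm{CH}^{d+n}(X\times X)_\Q$ is not a subring or a corner of $\mathrm{End}(P)$. The natural additive map $x\mapsto\tilde x=s_F\circ I(x)\circ i_F$ sends $x=\sum_n x_n$ to the finite Toeplitz block with entries $x_{n'-n}(n)\colon M(X)(n)[2n]\to M(X)(n')[2n']$. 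This map is not multiplicative. The $(n',n)$ entry of $\widetilde{xy}$ contains the terms $x_{n'-k}(k)\circ y_{k-n}(n)$ that factor through $M(X)(k)[2k]$ with $|k|>d$, and $\tilde x\tilde y$ omits them. So $\tilde\epsilon$ is in general not idempotent. The fact that each graded piece of $R$ is a Hom group of a Kimura-finite motive says nothing by itself about nilpotence for the multiplication of $R$. Hence ``an idempotent in a nilpotent ideal is zero'' is not yet available to you.

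The missing idea is the one the paper uses. Apply Kimura's nilpotence \cite[Proposition 7.5]{K05} to a truncated, non-idempotent endomorphism of an honest Chow motive, and use idempotence only through a morphism that the truncation fixes.

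\textbf{The paper's version.} Take an arbitrary $r\colon M(X)_\Q(m)[2m]\to\sM(\sT)$. By compactness, $j\circ r$ factors through a finite sum indexed by some $F$. Setting $\tilde f=s_F f i_F$ and $r_F=s_F j r$, one checks $\tilde f\circ r_F=r_F$. Since $\tilde f$ is homologically trivial, Kimura gives $\tilde f^N=0$, so $r_F=0$ and then $r=0$.

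\textbf{Repairing your setup.} The same idea works in the orbit category. Because $\Hom(M(X),M(X)(k)[2k])=\mathrm{CH}^{d+k}(X\times X)_\Q=0$ for $|k|>d$, every map from $M(X)$ to $\bigoplus_{k\in\Z}M(X)(k)[2k]$ factors through $P$. Thus truncation is multiplicative on maps out of $M(X)$. Let $\iota_0\colon M(X)\to P$ be the inclusion of the degree-$0$ summand. Induction gives $\tilde\epsilon^N\circ\iota_0=s_F\circ I(\epsilon^N)\circ i_F\circ\iota_0=s_F\circ I(\epsilon)\circ i_F\circ\iota_0=\tilde\epsilon\circ\iota_0$, and the components of this map are exactly the $\epsilon_n$. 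Nilpotence of $\tilde\epsilon$ then forces $\epsilon=0$.

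You should also say explicitly why $R_l(\sM(\sT))=0$ makes every component $\epsilon_n$ homologically trivial. The components are matrix entries of $I(\epsilon)=f$, and $R_l(f)$ factors through $R_l(\sM(\sT))$. This is what makes $\tilde\epsilon$ numerically trivial on the Kimura-finite motive $P$.
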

\begin{proof}
    We shall prove only \textup{(1)}, as the proofs of \textup{(2)} and \textup{(3)} are entirely analogous. By Lemma~\ref{exlem}, we can assume $k$ is algebraically closed. 
    
    As explained above, there exists an idempotent endomorphism
\[
f:\bigoplus_{i\in\mathbb Z} M(X)_\Q(i)[2i]\longrightarrow \bigoplus_{i\in\mathbb Z} M(X)_\Q(i)[2i]
\]
such that
\[
\sM(\sT)\simeq \operatorname{colim}\Bigl(\bigoplus_{i\in\mathbb Z} M(X)_\Q(i)[2i],f\Bigr).
\]
Equivalently, since the category is pseudo-abelian, $\sM(\sT)$ is identified with the image of $f$. Thus there exist morphisms
\[
p:\bigoplus_{i\in\mathbb Z} M(X)_\Q(i)[2i]\to \sM(\sT),\qquad
j:\sM(\sT)\to \bigoplus_{i\in\mathbb Z} M(X)_\Q(i)[2i]
\]
such that
\[
p\circ j=\mathrm{id}_{\sM(\sT)},\qquad j\circ p=f.
\]

To prove that $\sM(\sT)=0$, it is enough to show that for every $m\in\mathbb Z$ and every morphism
\[
r:M(X)_\Q(m)[2m]\to \sM(\sT),
\]
one has $r=0$.

Fix $m\in\mathbb Z$ and a morphism
\[
r:M(X)_\Q(m)[2m]\to \sM(\sT).
\]
Consider the composite
\[
j\circ r:
M(X)_\Q(m)[2m]\longrightarrow \bigoplus_{i\in\mathbb Z} M(X)_\Q(i)[2i].
\]
Since $M(X)_\Q(m)[2m]$ is compact, there exists a finite subset $F\subset \mathbb Z$ such that $j\circ r$ factors through the finite direct sum
\[
\bigoplus_{i\in F} M(X)_\Q(i)[2i].
\]
Let
\[
i_F:\bigoplus_{i\in F} M(X)_\Q(i)[2i]\hookrightarrow \bigoplus_{i\in\mathbb Z} M(X)_\Q(i)[2i]
\]
be the natural inclusion, and let
\[
s_F:\bigoplus_{i\in\mathbb Z} M(X)_\Q(i)[2i]\to \bigoplus_{i\in F} M(X)_\Q(i)[2i]
\]
be the natural projection. Then there exists a morphism
\[
r_F:M(X)_\Q(m)[2m]\to \bigoplus_{i\in F} M(X)_\Q(i)[2i]
\]
such that
\[
j\circ r=i_F\circ r_F,
\qquad\text{where } r_F=s_F\circ j\circ r.
\]
In other words, we have a commutative diagram
\[
\xymatrix{
M(X)_\Q(m)[2m] \ar[r]^-{r} \ar[d]_{r_F}
&
\sM(\sT) \ar[d]^{j}
\\
\displaystyle\bigoplus_{i\in F} M(X)_\Q(i)[2i] \ar[r]^-{i_F}
&
\displaystyle\bigoplus_{i\in\mathbb Z} M(X)_\Q(i)[2i].
}
\]
Consider the endomorphism
\[
\tilde{f}:\bigoplus_{i\in F} M(X)_\Q(i)[2i]
\overset{i_F}{\longrightarrow}
\bigoplus_{i\in\mathbb Z} M(X)_\Q(i)[2i]
\overset{f}{\longrightarrow}
\bigoplus_{i\in\mathbb Z} M(X)_\Q(i)[2i]
\overset{s_F}{\longrightarrow}
\bigoplus_{i\in F} M(X)_\Q(i)[2i].
\]
Then we have
\begin{align*}
\tilde{f}\circ r_F
&= s_F\circ f\circ i_F\circ r_F \\
&= s_F\circ f\circ j\circ r \\
&= s_F\circ j\circ p\circ j\circ r \\
&= s_F\circ j\circ r \\
&= r_F.
\end{align*}

By Theorem~\ref{ladicrealizationthm}, we have
\[
R_l(f)=0.
\]
Hence
\[
R_l(\tilde{f})=0.
\]
Therefore \(\tilde{f}\) is homologically trivial, and in particular numerically trivial. By \cite[Proposition 7.5]{K05}, \(\tilde{f}\) is nilpotent. Thus there exists \(N\geq 1\) such that
\[
\tilde{f}^N=0.
\]
Since \(\tilde{f}\circ r_F=r_F\), it follows that
\[
r_F=\tilde{f}^N\circ r_F=0.
\]
Consequently, \(j\circ r=i_F\circ r_F=0\), and hence
\[
r=p\circ j\circ r=0.
\]
We obtain the claim.
\end{proof}
\section{deformation invariance of phantomness}
For a prime $l$ and a commutative ring $A$ in which $l$ is invertible and which contains a primitive $l^v$-th root of unity, denote by $\beta\in \pi_2 K/l^v(A)$ the Bott element, where $v \geq 1$ if $l>3$, $v \geq 2$ if $l=3$, and $v \geq 4$ if $l=2$.
\begin{prop}\label{deformK1localK}
Let $S$ be a locally Noetherian integral scheme, and let $l$ be a prime number invertible on $S$. Let
\[
f:\sX \to S
\]
be a smooth proper morphism, and let $\sT\subset \perf(\sX)$ be an $S$-linear admissible full subcategory. Then, for any two geometric points $\overline{s},\overline{t}\to S$, there is a non-canonical isomorphism
\[
\pi_j\bigl(K/l^v(\sT_{\overline{s}})[\beta^{-1}]\bigr)\simeq \pi_j\bigl(K/l^v(\sT_{\overline{t}})[\beta^{-1}]\bigr)
\]
for every $j\in \mathbb{Z}$, where $v \geq 1$ if $l>3$, $v \geq 2$ if $l=3$, and $v \geq 4$ if $l=2$.
\end{prop}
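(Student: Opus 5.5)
Reduce the statement about $\sT$ to a statement about $\sX$ via the idempotent, and then use Thomason's \'etale descent together with smooth and proper base change.

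First, write $e_\sT=i_\sT\circ l_\sT$ for the $S$-linear idempotent endofunctor of $\perf(\sX)$ attached to the admissible embedding. Its kernel is a perfect object $P\in\perf(\sX\times_S\sX)$, and $P$ base-changes to the kernel of $e_{\sT_{\bar s}}$ on every geometric fiber. As in the lemma of \S2 identifying $K(\sT)$ with $l^{nc}_{\AA^1}K^S(\sT)$, idempotence gives
\[
K/l^v(\sT_{\bar s})[\beta^{-1}]\simeq \llim\bigl(K/l^v(\sX_{\bar s})[\beta^{-1}],\,e_{\sT_{\bar s}}\bigr).
\]
So $\pi_j(K/l^v(\sT_{\bar s})[\beta^{-1}])$ is the image of the idempotent $(e_{\sT_{\bar s}})_*$ acting on $\pi_j(K/l^v(\sX_{\bar s})[\beta^{-1}])$, and similarly for $\bar t$.

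Next, apply Thomason's theorem \cite{T85} to the smooth variety $\sX_{\bar s}$ over the algebraically closed field $\kappa(\bar s)$. It identifies
\[
\pi_j\bigl(K/l^v(\sX_{\bar s})[\beta^{-1}]\bigr)\simeq \bigoplus_{i}H^{2i-j}_{\et}\bigl(\sX_{\bar s},\Z/l^v(i)\bigr).
\]
Over an algebraically closed field the Thomason spectral sequence with $\Z/l^v$-coefficients degenerates, since étale cohomological dimension is finite and the differentials vanish after Bott inversion by a weight argument. I would note that $\Z/l^v(i)\simeq\Z/l^v$ after choosing roots of unity. Smooth and proper base change for $f$ shows that $R^qf_*\Z/l^v$ is locally constant with finite stalks. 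Because $S$ is connected, the étale fundamental groupoid gives a specialization isomorphism $H^q(\sX_{\bar s},\Z/l^v)\simeq H^q(\sX_{\bar t},\Z/l^v)$ along a chosen étale path; this choice is the source of non-canonicity.

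The key step is compatibility of the idempotents. The action of $e_\sT$ on $K/l^v[\beta^{-1}]$ is induced by the kernel $P$. Under the Thomason isomorphism it corresponds to the cohomological correspondence given by the Chern character of $P_{\bar s}$, namely $\mathrm{ch}(P_{\bar s})\cdot\mathrm{Td}$ in étale cohomology of $\sX_{\bar s}\times\sX_{\bar s}$. The class $\mathrm{ch}(P)$ is a global section of the local system $R(f\times f)_*\Z/l^v$ over $S$, because it comes from $K/l^v(\sX\times_S\sX)[\beta^{-1}]$ with its Chern class to étale cohomology. Hence the correspondence action on fibers is locally constant, and the specialization isomorphism intertwines the two idempotents. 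Taking images of the idempotents then gives the desired isomorphism for every $j$.

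I expect the main obstacle to be this intertwining. Two issues arise. First, with torsion coefficients the Thomason isomorphism is only an isomorphism of filtered groups, so a correspondence need not act diagonally on the graded pieces. Second, the Chern character is not integral mod $l^v$. To avoid both, I would not pass through cohomology. Instead, I would work directly with the étale-sheafified spectra: consider the sheaf of spectra $R f_*\,L_{K(1)}K/l^v$ on $S_\et$. By Thomason's descent and proper base change this sheaf is locally constant, since its homotopy sheaves are the locally constant $R^qf_*$ via the descent spectral sequence. The kernel $P$ then acts on it by a sheaf-level idempotent, $S$-linearly. The image of that idempotent is again a locally constant sheaf of spectra, and its stalks at $\bar s$ and $\bar t$ are isomorphic on homotopy groups along an étale path, which proves the statement.
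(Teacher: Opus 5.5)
Your final route (the sheaf-level argument in your last paragraph) is sound, and it uses the same inputs as the paper's proof: Thomason's descent spectral sequence for $K/l^v[\beta^{-1}]$, smooth and proper base change, and the fact that the projector $e$ is defined over $S$ and commutes with base change. It is organized differently, however.

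The paper reduces to $S=\Spec A$ with $A$ a strictly henselian DVR. There $cd_l(\mathrm{Frac}\,A)=1$ makes Thomason's theorem available on the total space $\sX$, and the paper compares each geometric fiber with $\sX$ itself. The pullbacks $K/l^v(\sX)[\beta^{-1}]\to K/l^v(\sX_{\ol{s}})[\beta^{-1}]$ induce morphisms of strongly convergent spectral sequences that are isomorphisms on $E_2$, hence isomorphisms on abutments. The paper invokes functoriality of the spectral sequences under $e$. The underlying point is that these comparison isomorphisms are induced by pullbacks of spectra, which commute with $e$ and its base changes by base change of the kernel. This is why the idempotents are intertwined.

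You instead globalize. You view $U\mapsto L_{K(1)}K/l^v(\sX_U)$ as a sheaf on $S_\et$ carrying an $S$-linear idempotent, and you transport along an étale path. This needs only connectedness of $S$ and avoids linking $\ol{s}$ and $\ol{t}$ through a chain of specializations. But your sentence ``by Thomason's descent and proper base change this sheaf is locally constant'' contains the whole of the paper's argument. Identifying the stalk at $\ol{s}$ with $L_{K(1)}K/l^v(\sX_{\ol{s}})$ compatibly with $e$ is exactly the comparison over a strictly henselian base. It also requires Thomason's hypotheses (a bound on the $l$-cohomological dimensions of residue fields) over the strict henselizations of $S$. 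The reduction to a DVR supplies that bound automatically.

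Two parts of your draft should be dropped.

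First, the claim that the $\Z/l^v$-coefficient Thomason spectral sequence degenerates over an algebraically closed field is false in general. Over $\bC$ it is the Atiyah--Hirzebruch spectral sequence for mod $l^v$ topological $K$-theory, whose differentials need not vanish; a weight argument only works with $\Q_l$-coefficients. Consequently the homotopy sheaves of your sheaf are not in general $\bigoplus_i R^{2i-j}f_*\mu_{l^v}^{\otimes i}$. What is true, and suffices, is that they carry finite filtrations whose graded pieces are subquotients of these locally constant constructible sheaves, so they are again locally constant. Alternatively, as in the paper, a morphism of strongly convergent spectral sequences that is an isomorphism on $E_2$ is an isomorphism on abutments, with no degeneration needed.

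Second, the Chern-character route is unnecessary. Compatibility with the idempotents comes from base change of the kernel at the level of $K$-theory spectra, not from cohomological correspondences.
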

\begin{proof}
We may therefore assume that $S=\Spec(A)$ for a Noetherian strictly henselian discrete valuation ring $A$. Let $K$ denote the fraction field of $A$. Since $cd_l(K)=1$, Thomason's theorem \cite[Theorem~2.45 and Theorem~3.1]{T85} yields a strongly convergent spectral sequence
\[
E_2^{i,j}(\sX)=H_{\et}^{i-j}(\sX,\Z/l^v(j))
\Longrightarrow
\pi_j\bigl(K/l^v(\sX)[\beta^{-1}]\bigr).
\]
Likewise, we have strongly convergent spectral sequences
\[
E_2^{i,j}(\sX_{\overline{s}})=H_{\et}^{i-j}(\sX_{\overline{s}},\Z/l^v(j))
\Longrightarrow
\pi_j\bigl(K/l^v(\sX_{\overline{s}})[\beta^{-1}]\bigr),
\]
and
\[
E_2^{i,j}(\sX_{\overline{t}})=H_{\et}^{i-j}(\sX_{\overline{t}},\Z/l^v(j))
\Longrightarrow
\pi_j\bigl(K/l^v(\sX_{\overline{t}})[\beta^{-1}]\bigr).
\]

The natural morphisms $\sX_{\overline{s}}\to \sX$ and 
$\sX_{\overline{t}}\to \sX$ induce morphisms of spectral sequences
\[
\xymatrix@C=4.2em@R=3.2em{
H_{\et}^{i-j}(\sX_{\overline{s}},\Z/l^v(j))
\ar@{=>}[r]
&
\pi_j\bigl(K/l^v(\sX_{\overline{s}})[\beta^{-1}]\bigr)
\\
H_{\et}^{i-j}(\sX,\Z/l^v(j)) \ar@{=>}[r] \ar[u]^{\ol{s}^*} \ar[d]_{\ol{t}^*}
&
\pi_j\bigl(K/l^v(\sX)[\beta^{-1}]\bigr) \ar[u] \ar[d]
\\
H_{\et}^{i-j}(\sX_{\overline{t}},\Z/l^v(j))
\ar@{=>}[r]
&
\pi_j\bigl(K/l^v(\sX_{\overline{t}})[\beta^{-1}]\bigr)
}
\]
By smooth proper base change, the pullback maps
\[
\overline{s}^*:H_{\et}^{i-j}(\sX,\Z/l^v(j))\longrightarrow H_{\et}^{i-j}(\sX_{\overline{s}},\Z/l^v(j))
\]
and
\[
\overline{t}^*:H_{\et}^{i-j}(\sX,\Z/l^v(j))\longrightarrow H_{\et}^{i-j}(\sX_{\overline{t}},\Z/l^v(j))
\]
are isomorphisms. Since the above spectral sequences are strongly convergent, it follows that
\[
\pi_j\bigl(K/l^v(\sX_{\overline{s}})[\beta^{-1}]\bigr)
\simeq
\pi_j\bigl(K/l^v(\sX)[\beta^{-1}]\bigr)
\simeq
\pi_j\bigl(K/l^v(\sX_{\overline{t}})[\beta^{-1}]\bigr).
\]

Let
\[
i:\sT\hookrightarrow \perf(\sX)
\]
be the inclusion, and let
\[
l:\perf(\sX)\to \sT
\]
be its left adjoint. Set
\[
e:=i\circ l:\perf(\sX)\to \perf(\sX).
\]
Then $e$ is an idempotent exact endofunctor. By base change, $e$ induces idempotent exact endofunctors
\[
e_{\overline{s}}:\perf(\sX_{\overline{s}})\to \perf(\sX_{\overline{s}})
\qquad\text{and}\qquad
e_{\overline{t}}:\perf(\sX_{\overline{t}})\to \perf(\sX_{\overline{t}}),
\]
whose essential images are precisely $\sT_{\overline{s}}$ and $\sT_{\overline{t}}$, respectively.

By functoriality of Thomason's spectral sequence with respect to exact functors, the endofunctors
\[
e,\qquad e_{\overline{s}},\qquad e_{\overline{t}}
\]
induce endomorphisms of the above spectral sequences, and these endomorphisms are compatible with the pullback maps $\overline{s}^*$ and $\overline{t}^*$. In particular, the isomorphism
\[
\pi_j\bigl(K/l^v(\sX_{\overline{s}})[\beta^{-1}]\bigr)
\simeq
\pi_j\bigl(K/l^v(\sX_{\overline{t}})[\beta^{-1}]\bigr)
\]
intertwines the endomorphisms induced by $e_{\overline{s}}$ and $e_{\overline{t}}$.

Now, since $\sT_{\overline{s}}\subset \perf(\sX_{\overline{s}})$ and $\sT_{\overline{t}}\subset \perf(\sX_{\overline{t}})$ are admissible, \(K\)-theory splits accordingly, and the image of the endomorphism induced by $e_{\overline{s}}$ (resp. $e_{\overline{t}}$) on
\[
\pi_j\bigl(K/l^v(\sX_{\overline{s}})[\beta^{-1}]\bigr) \text{ (resp. }\pi_j\bigl(K/l^v(\sX_{\overline{t}})[\beta^{-1}]\bigr) )
\]
is naturally identified with
\[
\pi_j\bigl(K/l^v(\sT_{\overline{s}})[\beta^{-1}]\bigr) \text{ (resp. }\pi_j\bigl(K/l^v(\sT_{\overline{t}})[\beta^{-1}]\bigr) \bigr) ).
\]
Therefore the above isomorphism restricts to an isomorphism
\[
\pi_j\bigl(K/l^v(\sT_{\overline{s}})[\beta^{-1}]\bigr)
\simeq
\pi_j\bigl(K/l^v(\sT_{\overline{t}})[\beta^{-1}]\bigr).
\]
This proves the proposition.
\end{proof}
The following corollary follows immediately from \cite[Theorem~A]{BOR20}, Proposition~\ref{deformK1localK}, and Theorem~\ref{criterionphantominsection3}.
\begin{cor} Let $S$ be a Noetherian connected scheme, and let $f:\sX \to S$ be a smooth proper morphism. Assume that the Chow motive of each fiber is Kimura-finite. Suppose that for some point $0\in S$, the fiber $\sX_0$ admits a motivic quasi-phantom category $\sT_0$, and that there exists a prime number $l\in \kappa(0)^*$ such that $L_{K(1,l)}K(\sT_0)\neq 0$. Then there exists an \'etale morphism $\sU \to S$ whose image contains $0$ and such that every fiber of $\sX_{\sU} \to \sU$ admits a motivic quasi-phantom category. Moreover, under the Belmans--Okawa--Ricolfi conjecture \cite[Conjecture 9.9]{BOR20}, the nonvanishing condition $L_{K(1,l)}K(\sT_0)\neq 0$ is unnecessary.
\end{cor}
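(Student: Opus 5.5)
The proof combines three inputs. The first is the deformation result of Belmans--Okawa--Ricolfi \cite[Theorem~A]{BOR20}. The second is the fibrewise comparison of Proposition~\ref{deformK1localK}. The third is the phantomness criterion Theorem~\ref{criterionphantominsection3}(1).

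\emph{Step 1: spread out $\sT_0$.} Since $\sT_0\subset\perfdg(\sX_0)$ is admissible, I invoke \cite[Theorem~A]{BOR20}. It should give an \'etale morphism $\sU\to S$ whose image contains $0$, a point $u_0\in\sU$ over $0$, and a $\sU$-linear admissible subcategory $\sT\subset\perf(\sX_\sU)$. The fibre $\sT_{u_0}$ should agree with $\sT_0$, after possibly passing to the finite separable extension $\kappa(u_0)/\kappa(0)$. By Lemma~\ref{exlem} this base change does not affect motivic quasi-phantomness, and it does not affect the nonvanishing of $L_{K(1,l)}K$. Shrinking $\sU$ to the connected component through $u_0$, and then to an affine Noetherian integral neighbourhood, I may assume $\sU$ is connected, integral, and that $l$ is invertible on $\sU$. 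The last condition is open, since it only requires $l\in\kappa(0)^*$.

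\emph{Step 2: transport the $K(1)$-local information to every fibre.} Let $u\in\sU$ with geometric point $\bar u$, and let $\bar u_0$ be a geometric point over $u_0$. Proposition~\ref{deformK1localK} gives $\pi_j(K/l^v(\sT_{\bar u})[\beta^{-1}])\simeq\pi_j(K/l^v(\sT_{\bar u_0})[\beta^{-1}])$ for all $j$ and $v$ large. After passing to the limit over $v$ (Thomason) and rationalizing, this identifies $\LKl K(\sT_{\bar u})_\Q$ with $\LKl K(\sT_{\bar u_0})_\Q$ up to noncanonical isomorphism. By hypothesis $\sT_0$ is a motivic quasi-phantom, so every rational additive invariant of it vanishes, in particular $\LKl K(\sT_{\bar u_0})_\Q=0$. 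Hence $\LKl K(\sT_{\bar u})_\Q=0$ for every $u$.

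\emph{Step 3: apply the criterion on each fibre.} Fix $u\in\sU$. The fibre $\sX_u$ has Kimura-finite Chow motive by assumption; its base change along the finite separable extension $\kappa(u)/\kappa(s)$, with $s$ the image of $u$, should remain Kimura-finite. Also $\sT_u\hookrightarrow\perfdg(\sX_u)$ is a geometric realization, and $l\in\kappa(u)^*$. Theorem~\ref{criterionphantominsection3}(1) then shows that $\sT_u$ is a motivic quasi-phantom. The hypothesis $L_{K(1,l)}K(\sT_0)\neq0$ serves to make $\sT_u$ nonzero. Otherwise $\sT_u$ could be the zero category, which is not a phantom. If $\sT_u=0$, its $K(1)$-local $K$-theory would vanish, so Step 2 run before rationalizing would force $L_{K(1,l)}K(\sT_{\bar u_0})=0$, contradicting the hypothesis. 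Under \cite[Conjecture~9.9]{BOR20}, nonvanishing of an admissible subcategory is automatically preserved in families, so this hypothesis can be dropped.

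\emph{Main obstacle.} The delicate point is Step 2 combined with the nonvanishing argument. Proposition~\ref{deformK1localK} is stated mod $l^v$ with Bott inversion. One must check that the noncanonical isomorphisms are compatible enough in $v$ to identify the $l$-adic limit $\LKl K$, or at least to detect its vanishing. Detecting vanishing should suffice, since vanishing of all the mod $l^v$ Bott-inverted groups is equivalent to vanishing of $\LKl K$. I would argue through this equivalence rather than through a full identification.
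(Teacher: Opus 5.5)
Your decomposition matches the paper's. Its proof only cites \cite[Theorem~A]{BOR20}, Proposition~\ref{deformK1localK} and Theorem~\ref{criterionphantominsection3}(1), used as in your Steps 1--3. Your nonvanishing argument in Step~3 is also the right idea: vanishing of all mod $l^v$ Bott-inverted groups at $\bar u$ forces $\LKl K(\sT_{\bar u_0})=0$ by $l$-completeness.

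\textbf{The gap is in Step~2, and your proposed remedy does not close it.} The equivalence ``all $\pi_j(K/l^v(-)[\beta^{-1}])$ vanish $\iff$ $\LKl K(-)=0$'' detects \emph{integral} vanishing, but Step~2 must transport \emph{rational} vanishing. At $\bar u_0$ you only know $\LKl K(\sT_{\bar u_0})_\Q=0$. In the case the corollary is about, $\LKl K(\sT_{\bar u_0})$ is nonzero with finite homotopy groups; that is the point of the hypothesis $L_{K(1,l)}K(\sT_0)\neq 0$. So the groups $\pi_j(K/l^v(\sT_{\bar u_0})[\beta^{-1}])$ are nonzero finite groups for every $v$, and ``detecting vanishing'' transfers nothing to $\bar u$. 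The rational information is the $\Z_l$-rank of $\pi_j\LKl K$, which no single level $v$ sees. Your other route, identifying the limits over $v$, is exactly the step you leave unjustified.

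\textbf{Either of the following fixes it.}

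(a) For a fixed strictly henselian DVR, the isomorphisms in the proof of Proposition~\ref{deformK1localK} are the pullbacks along $\sX_{\bar s}\to\sX\leftarrow\sX_{\bar t}$. With compatibly chosen Bott elements, they therefore commute with the reductions mod $l^{v+1}\to l^v$ and with the idempotent $e$. Since all groups are finite, the $\varprojlim^1$-terms vanish. Chaining along specializations then gives $\pi_j\LKl K(\sT_{\bar u})\cong\pi_j\LKl K(\sT_{\bar u_0})$.

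(b) A counting argument needs no compatibility at all. Set $M_j:=\pi_j\LKl K(\sT_{\bar u})$. It is a finitely generated $\Z_l$-module, being a summand of the abutment of Thomason's spectral sequence, whose $E_2$-terms $H^p_\et(\sX_{\bar u},\Z_l(q))$ are finitely generated. Since $K/l^v(\sT_{\bar u})[\beta^{-1}]\simeq\LKl K(\sT_{\bar u})/l^v$, the universal coefficient sequence gives, for $v\gg0$,
\[
|\pi_j(K/l^v(\sT_{\bar u})[\beta^{-1}])|=l^{v\cdot\mathrm{rk}M_j}\,|(M_j)_{\mathrm{tors}}|\,|(M_{j-1})_{\mathrm{tors}}|.
\]
Abstract isomorphisms for all large $v$ therefore force equal $\Z_l$-ranks at $\bar u$ and $\bar u_0$. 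Rank $0$ at $\bar u_0$ then gives $\LKl K(\sT_{\bar u})_\Q=0$.

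\textbf{Two smaller points.}

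First, an integral open neighbourhood of $u_0$ need not exist if $u_0$ lies on several irreducible components. Instead, pass to $\sU_{\mathrm{red}}$ (the fibres are unchanged), keep the connected component of $u_0$, and chain Proposition~\ref{deformK1localK} across irreducible components.

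Second, you assert without proof the passage from $L_{K(1,l)}K(\sT_0)\neq0$ over $\kappa(0)$ to $\LKl K(\sT_{\bar u_0})\neq0$, including your claim that finite separable base change preserves this nonvanishing. A transfer argument only handles finite extensions of degree prime to $l$. In general one needs Galois descent for $L_{K(1)}K$. The paper's one-line proof leaves this point implicit as well.
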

    
\bibliography{bib}

@article {A14,
    AUTHOR = {Ayoub, Joseph},
     TITLE = {La r\'{e}alisation \'{e}tale et les op\'{e}rations de {G}rothendieck},
   JOURNAL = {Ann. Sci. \'{E}c. Norm. Sup\'{e}r. (4)},
  FJOURNAL = {Annales Scientifiques de l'\'{E}cole Normale Sup\'{e}rieure. Quatri\`eme
              S\'{e}rie},
    VOLUME = {47},
      YEAR = {2014},
    NUMBER = {1},
     PAGES = {1--145},
      ISSN = {0012-9593},
   MRCLASS = {14C15 (14F05 14F20 14F42 18G55)},
  MRNUMBER = {3205601},
MRREVIEWER = {C. A. M. Peters},
       DOI = {10.24033/asens.2210},
       URL = {https://doi.org/10.24033/asens.2210},
}

@incollection {AH61,
    AUTHOR = {Atiyah, M. F. and Hirzebruch, F.},
     TITLE = {Vector bundles and homogeneous spaces.},
 BOOKTITLE = {Proc. {S}ympos. {P}ure {M}ath., {V}ol. {III}},
     PAGES = {7--38},
 PUBLISHER = {, },
      YEAR = {1961},
   MRCLASS = {57.30 (14.52)},
  MRNUMBER = {139181},
MRREVIEWER = {R.\ Bott},
}

@Article{BBKS15,
 Author = {B{\"o}hning, Christian and Graf von Bothmer, Hans-Christian and Katzarkov, Ludmil and Sosna, Pawel},
 Title = {Determinantal {Barlow} surfaces and phantom categories},
 FJournal = {Journal of the European Mathematical Society (JEMS)},
 Journal = {J. Eur. Math. Soc. (JEMS)},
 ISSN = {1435-9855},
 Volume = {17},
 Number = {7},
 Pages = {1569--1592},
 Year = {2015},
 Language = {English},
 DOI = {10.4171/JEMS/539},
 zbMATH = {6483975},
 Zbl = {1323.14014}
}

@Misc{BOR20,
 Author = {Belmans, Pieter and Okawa, Shinnosuke and Ricolfi, Andrea T.},
 Title = {Moduli spaces of semiorthogonal decompositions in families},
 Year = {2020},
 HowPublished = {Preprint, {arXiv}:2002.03303 [math.{AG}] (2020)},
 URL = {https://arxiv.org/abs/2002.03303},
 arXiv = {arXiv:2002.03303}
}

@article {C83,
    AUTHOR = {Connes, Alain},
     TITLE = {Cohomologie cyclique et foncteurs {${\rm Ext}\sp n$}},
   JOURNAL = {C. R. Acad. Sci. Paris S\'{e}r. I Math.},
  FJOURNAL = {Comptes Rendus des S\'{e}ances de l'Acad\'{e}mie des Sciences.
              S\'{e}rie I. Math\'{e}matique},
    VOLUME = {296},
      YEAR = {1983},
    NUMBER = {23},
     PAGES = {953--958},
      ISSN = {0249-6291},
   MRCLASS = {18F25 (19D55 46L80)},
  MRNUMBER = {777584},
}

@book {CD19,
    AUTHOR = {Cisinski, Denis-Charles and D\'{e}glise, Fr\'{e}d\'{e}ric},
     TITLE = {Triangulated categories of mixed motives},
    SERIES = {Springer Monographs in Mathematics},
 PUBLISHER = {Springer, Cham},
      YEAR = {[2019] \copyright 2019},
     PAGES = {xlii+406},
      ISBN = {978-3-030-33241-9; 978-3-030-33242-6},
   MRCLASS = {14F42 (14C15 14C35 18G80 19D55)},
  MRNUMBER = {3971240},
MRREVIEWER = {Igor A. Rapinchuk},
       DOI = {10.1007/978-3-030-33242-6},
       URL = {https://doi.org/10.1007/978-3-030-33242-6},
}

@InCollection{CKP13,
 Author = {Cheltsov, Ivan and Katzarkov, Ludmil and Przyjalkowski, Victor},
 Title = {Birational geometry via moduli spaces},
 BookTitle = {Birational geometry, rational curves, and arithmetic. Based on the symposium ``Geometry over closed fields'', St. John, UK, February 2012},
 ISBN = {978-1-4614-6481-5; 978-1-4614-6482-2},
 Pages = {93--132},
 Year = {2013},
 Publisher = {New York, NY: Springer},
 Language = {English},
 DOI = {10.1007/978-1-4614-6482-2_5},
 zbMATH = {6211437},
 Zbl = {1302.14035}
}

@Article{CT11,
 Author = {Cisinski, Denis-Charles and Tabuada, Gon{\c{c}}alo},
 Title = {Non-connective {{\(K\)}}-theory via universal invariants},
 FJournal = {Compositio Mathematica},
 Journal = {Compos. Math.},
 ISSN = {0010-437X},
 Volume = {147},
 Number = {4},
 Pages = {1281--1320},
 Year = {2011},
 Language = {English},
 DOI = {10.1112/S0010437X11005380},
 zbMATH = {5938398},
 Zbl = {1247.19001}
}

@Article{GO13,
 Author = {Gorchinskiy, Sergey and Orlov, Dmitri},
 Title = {Geometric phantom categories},
 FJournal = {Publications Math{\'e}matiques},
 Journal = {Publ. Math., Inst. Hautes {\'E}tud. Sci.},
 ISSN = {0073-8301},
 Volume = {117},
 Pages = {329--349},
 Year = {2013},
 Language = {English},
 DOI = {10.1007/s10240-013-0050-5},
 zbMATH = {6185247},
 Zbl = {1285.14018}
}

@incollection {FT87,
    AUTHOR = {Fe\u igin, B. L. and Tsygan, B. L.},
     TITLE = {Cyclic homology of algebras with quadratic relations,
              universal enveloping algebras and group algebras},
 BOOKTITLE = {{$K$}-theory, arithmetic and geometry ({M}oscow, 1984--1986)},
    SERIES = {Lecture Notes in Math.},
    VOLUME = {1289},
     PAGES = {210--239},
 PUBLISHER = {Springer, Berlin},
      YEAR = {1987},
      ISBN = {3-540-18571-2},
   MRCLASS = {17B35 (16A03 17B56 18G15 18G55 19D55 58H10)},
  MRNUMBER = {923137},
MRREVIEWER = {Zbigniew\ Marciniak},
       DOI = {10.1007/BFb0078369},
       URL = {https://doi.org/10.1007/BFb0078369},
}

@article {HSO17,
    AUTHOR = {Hoyois, Marc and Kelly, Shane and \O stv\ae r, Paul Arne},
     TITLE = {The motivic {S}teenrod algebra in positive characteristic},
   JOURNAL = {J. Eur. Math. Soc. (JEMS)},
  FJOURNAL = {Journal of the European Mathematical Society (JEMS)},
    VOLUME = {19},
      YEAR = {2017},
    NUMBER = {12},
     PAGES = {3813--3849},
      ISSN = {1435-9855,1435-9863},
   MRCLASS = {14F42 (19E15)},
  MRNUMBER = {3730515},
MRREVIEWER = {Ramdorai\ Sujatha},
       DOI = {10.4171/JEMS/754},
       URL = {https://doi.org/10.4171/JEMS/754},
}

@Article{K05,
 Author = {Kimura, Shun-Ichi},
 Title = {Chow groups are finite dimensional, in some sense},
 FJournal = {Mathematische Annalen},
 Journal = {Math. Ann.},
 ISSN = {0025-5831},
 Volume = {331},
 Number = {1},
 Pages = {173--201},
 Year = {2005},
 Language = {English},
 DOI = {10.1007/s00208-004-0577-3},
 zbMATH = {2132967},
 Zbl = {1067.14006}
}

@misc {L18,
Author = {Jacob Lurie},
Title = {Higher algebra},
Year = {2018},
}

@article {R05,
    AUTHOR = {Riou, Jo\"el},
     TITLE = {Dualit\'e{} de {S}panier-{W}hitehead en g\'eom\'etrie
              alg\'ebrique},
   JOURNAL = {C. R. Math. Acad. Sci. Paris},
  FJOURNAL = {Comptes Rendus Math\'ematique. Acad\'emie des Sciences. Paris},
    VOLUME = {340},
      YEAR = {2005},
    NUMBER = {6},
     PAGES = {431--436},
      ISSN = {1631-073X,1778-3569},
   MRCLASS = {14F35 (55P25)},
  MRNUMBER = {2135324},
MRREVIEWER = {Feng-Wen\ An},
       DOI = {10.1016/j.crma.2005.02.002},
       URL = {https://doi.org/10.1016/j.crma.2005.02.002},
}

@Misc{R12,
 Author = {Robalo, Marco},
 Title = {Noncommutative {Motives} {I}: {A} {Universal} {Characterization} of the {Motivic} {Stable} {Homotopy} {Theory} of {Schemes}},
 Year = {2012},
 HowPublished = {Preprint, {arXiv}:1206.3645 [math.{AG}] (2012)},
 URL = {https://arxiv.org/abs/1206.3645},
 arXiv = {arXiv:1206.3645}
}

@Misc{R13,
 Title={Noncommutative Motives II: K-Theory and Noncommutative Motives},
 Author={Marco Robalo},
 Year={2013},
 HowPublished = {Preprint, {arXiv}:1306.3795[math.{KT}] (2013)},
 URL = {https://arxiv.org/abs/1306.3795},
 arXiv = {arXiv:1306.3795},
}

@article{RO08,
  title={Modules over motivic cohomology},
  author={R{\"o}ndigs, Oliver and {\O}stv{\ae}r, Paul Arne},
  journal={Advances in Mathematics},
  volume={219},
  number={2},
  pages={689--727},
  year={2008},
  publisher={Elsevier}
}

@article {S20,
    AUTHOR = {Sosna, Pawel},
     TITLE = {Some remarks on phantom categories and motives},
   JOURNAL = {Bull. Belg. Math. Soc. Simon Stevin},
  FJOURNAL = {Bulletin of the Belgian Mathematical Society. Simon Stevin},
    VOLUME = {27},
      YEAR = {2020},
    NUMBER = {3},
     PAGES = {337--352},
      ISSN = {1370-1444,2034-1970},
   MRCLASS = {14C15 (14C35 14F08)},
  MRNUMBER = {4146735},
MRREVIEWER = {Joan\ Pons-Llopis},
       DOI = {10.36045/bbms/1599616818},
       URL = {https://doi.org/10.36045/bbms/1599616818},
}

@inproceedings{T85,
  title={Algebraic $ K $-theory and {\'e}tale cohomology},
  author={Thomason, Robert W},
  booktitle={Annales scientifiques de l'{\'E}cole Normale Sup{\'e}rieure},
  volume={18},
  pages={437--552},
  year={1985}
}

@incollection {V00b,
    AUTHOR = {Voevodsky, Vladimir},
     TITLE = {Triangulated categories of motives over a field},
 BOOKTITLE = {Cycles, transfers, and motivic homology theories},
    SERIES = {Ann. of Math. Stud.},
    VOLUME = {143},
     PAGES = {188--238},
 PUBLISHER = {Princeton Univ. Press, Princeton, NJ},
      YEAR = {2000},
   MRCLASS = {14F42 (14C25)},
  MRNUMBER = {1764202},
}

@article {V10,
    AUTHOR = {Voevodsky, Vladimir},
     TITLE = {Cancellation theorem},
   JOURNAL = {Doc. Math.},
  FJOURNAL = {Documenta Mathematica},
      YEAR = {2010},
      NOTE = {Extra volume: Andrei A. Suslin sixtieth birthday},
     PAGES = {671--685},
      ISSN = {1431-0635},
   MRCLASS = {14F42 (19E15)},
  MRNUMBER = {2804268},
MRREVIEWER = {Oliver R\"{o}ndigs},
}
\bibliographystyle{alpha}
\end{document}